\numberwithin{equation}{section}
\renewcommand{\subsection}{\@startsection
	{subsection}{2}{0mm}{\baselineskip}{-0.25cm}
	{\normalfont\normalsize\bf}}
\newtheorem{theorem}{Theorem}[section]
\newtheorem{proposition}[theorem]{Proposition}
\newtheorem{lemma}[theorem]{Lemma}
\newtheorem{result}[theorem]{Result}
{\theoremstyle{definition}
	\newtheorem{definition}[theorem]{Definition}

	}
\theoremstyle{remark}
\newtheorem{remark}[theorem]{Remark}
\newcommand{\bfq}{{\mathbb F_q}}
\newcommand{\fqs}{{\mathbb F_{q^2}}}
\newcommand{\PGU}{{\rm PGU}}
\newcommand{\PSL}{{\rm PSL}}
\newcommand{\PGL}{{\rm PGL}}
\newcommand{\PG}{{\rm PG}}
\newcommand{\cX}{{\mathcal X}}
\newcommand{\cG}{{\mathcal G}}
\newcommand{\cF}{{\mathcal F}}
\newcommand{\cH}{{\mathcal H}}
\newcommand{\cU}{{\mathcal U}}
\newcommand{\cC}{{\mathcal C}}
\newcommand{\aut}{{\rm Aut}}
\newcommand{\cM}{{\mathcal{M}}}
\definecolor{Amaranto}{rgb}{0.9, 0.17, 0.31}
\definecolor{Borgogna}{rgb}{0.5, 0.0, 0.13}
\begin{document}
	
	\author[V. Pallozzi Lavorante]{Vincenzo Pallozzi Lavorante}
	
	\address{Dipartimento di Matematica Pura e Applicata, Universit\'a degli Studi di Modena e Reggio Emilia}
	\email{vincenzo.pallozzilavorante@unimore.it}
	
	\author[V. Smaldore]{Valentino Smaldore}
	\address{	Dipartimento di Matematica, Informatica ed Economia, Universit\`a degli Studi della Basilicata}
	\email{valentino.smaldore@unibas.it}
	\title{New hemisystems of the Hermitian surface}

	\begin{abstract}
		Constructing hemisystems of the Hermitian surface is a well known,
		apparently difficult, problem in Finite geometry. So far, a few
		infinite families and some sporadic examples have been constructed.
		One of the different approaches relays on the Fuhrmann-Torres maximal
		curve and provides a hemisystem in $PG(3,p^2)$ for every prime $p$ of
		the form $p=1+16a^2$. Here we show that this approach also works in
		$PG(3,p^2)$ for every prime $p=1+4a^2$.
		The resulting hemisystem gives rise to two weight linear codes and
		strongly regular graphs whose properties are also investigated.
	\end{abstract}
	
	\maketitle
	
	\begin{small}
		
		{\bf Keywords:} Hemisystems, Hermitian Surface, Maximal curves.
		
		{\bf 2020 MSC:} {\color{black} Primary: 05B25. Secondary: 05E30, 51E20}.
		
	\end{small}

	\section{Introduction}

The Hermitian surface $\mathcal{U}_3$ of $PG(3,q^2)$ is the set of
all self-dual points of a non-degenerate unitary polarity of
$PG(3,q^2)$.
A generator of $\mathcal{U}_3$ is a line of $PG(3,q^2)$ entirely
contained in $\mathcal{U}_3$. The total number of generators of
$\mathcal{U}_3$ is $(q^3+1)(q+1)$ and through  any point
$P\in\mathcal{U}_3$ there exist exactly $q+1$ generators and they are
the intersection of $\mathcal{U}_3$ with its tangent plane at $P$.
Therefore, for any divisor $m$ of $q+1$, one can ask whether a
symmetric point-generator configuration for a family of generators
exists such that each point of $\mathcal{U}_3$ is incident with
exactly $(q+1)/m$ generators from the family.

In \cite{Segre} Segre proved
that such a symmetric point-generator configuration does not exist for
$m\neq 2$, and he introduced the concept of a hemisystem for the case
$m=2$. Therefore, a hemisystem of $\cU_3$ consists of
$\frac{1}{2}(q^3+1)(q+1)$ generators of $\cU_3$, exactly
$\frac{1}{2}(q+1)$ for each point on $\cU_3$. Segre exhibited a
hemisystem for $q=3$.

Hemisystems are interesting configurations which are connected with
important combinatorial objects such as strongly regular graphs,
partial quadrangles and 4-class imprimitive cometric
$Q$-antipodal association schemes that are not metric;  see
\cite{0,B1,B2}. Nevertheless, finding hemisystems is a challenging
problem.
The first infinite family
was constructed almost 50 years after by Cossidente and Penttila
\cite{0} who also found a new sporadic example in $\cU_3(5)$.  Later
on,  Bamberg, Giudici and Royle \cite{11} and \cite[Section 4.1]{22}
constructed more sporadic examples for $q=11,17,19,23,27$.
Recently several new infinite families of hemisystems appeared in the
literature. Bamberg, Lee, Momihara and Xiang \cite{B1} constructed  a
new infinite family of hemisystems on $\cU_3(q)$ for every $q\equiv -1
\pmod 4$ that generalize the previously known sporadic examples. Their
construction is based on  cyclotomic classes of $\mathbb{F}_{q^6}^*$
and involves results on characters and Gauss sums. Cossidente and
Pavese \cite{cp} constructed, for every odd $q$, a hemisystem of
$\cU_3$ left invariant by a subgroup of $\PGU(4,q)$ of order
$(q+1)q^2$.

The approach introduced in \cite{1} relays on the Fuhrmann-Torres
curve over $q^2$  naturally embedded in $\mathcal{U}_3$. Here term of
a curve defined over $q^2$ is used for a (projective, geometrically
irreducible, non-singular) algebraic curve $\cX$ of $PG(3,q^2)$.
Their construction provided a hemisystem of $\cU(3,q)$
whenever $q=p$ is a prime of the form $p=1+4a^2$ for an even integer
$a$. In this paper we investigate the analog construction for
$p=1+4a^2$  with an odd integer $a$, and show that it produces a
hemisystem , as well, for every such $p$.
We mention that a prime number $p$ of the form $p=1+4a^2$ with an
integer $a$ is called a Landau number. If Landau's conjecture is true,
that is there exist infinitely many Landau numbers, then an infinite
family of hemisystems is obtained.
	Our main result is stated in the following theorem.
	\begin{theorem}\label{th:1}
		Let $ p $ be a prime number where $ p =1 +4a^2 $ with an integer $ a $. Then there exists a hemisystem in the Hermitian surface $ \mathcal{U}_3 $ of $ \PG(3, p^2) $ which is left invariant by a subgroup of $ \PGU(4, p) $ isomorphic to $ \PSL(2, p)\times C_\frac{q+1}{2} $.
	\end{theorem}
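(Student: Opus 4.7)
My plan is to adapt the strategy of Korchm\'aros, Nagy and Speziali \cite{1}, whose main theorem handled the case when $a$ is even. Their construction is built around the Fuhrmann--Torres curve $\cF$, an absolutely irreducible non-singular curve of degree $p+1$ lying on $\cU_3$, whose stabiliser in $\PGU(4,p)$ contains a subgroup $G \cong \PSL(2,p) \times C_{(p+1)/2}$. My goal will be to produce a $G$-invariant union $\cH$ of generators of $\cU_3$ such that every point of $\cU_3$ lies on exactly $(p+1)/2$ lines of $\cH$, which forces $|\cH| = (p^{3}+1)(p+1)/2$.

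I would first classify the $G$-orbits on $\cU_3(\F_{p^2})$ and on the set of generators, using the product structure of $G$ together with standard subgroup information for $\PSL(2,p)$; orbit lengths then follow from the Orbit--Stabiliser theorem. For every line-orbit $\mathcal{O}_i$ and every point-orbit $\Omega_j$, the number $n_{ij}$ of lines of $\mathcal{O}_i$ through a representative of $\Omega_j$ can be extracted from a standard double-counting identity relating $|\mathcal{O}_i|$, $|\Omega_j|$ and the point--line incidences inside $\mathcal{O}_i \times \Omega_j$. A $G$-invariant family $\cH = \bigcup_{i \in I}\mathcal{O}_i$ is then a hemisystem if and only if $\sum_{i \in I} n_{ij} = (p+1)/2$ for every $j$, and my second step would be to exhibit an explicit index set $I$ solving this linear system, guided by the analogous selection performed in \cite{1}.

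The main obstacle will be the parity-sensitive bridge between these two tasks. The hypothesis on $a$ enters through the $2$-adic valuation of $p-1 = 4a^{2}$, which is at least $4$ for $a$ even but exactly $2$ for $a$ odd; the $2$-part of $|\PSL(2,p)|$ shrinks accordingly and several stabilisers inside $G$ change order. As a consequence, line-orbits that are irreducible under $G$ in the even case may split into two equal-sized $G$-orbits when $a$ is odd (or vice versa), and the same phenomenon can affect point-orbits lying on $\cF$ versus those off $\cF$. The technical core of the proof will be to determine exactly how the orbit decomposition changes with the parity of $a$, to recompute the incidence numbers $n_{ij}$ for the new orbits, and to check that an appropriate choice of $I$ still produces $(p+1)/2$ incidences at every point-orbit; once this finite arithmetic verification is in place, the $G$-invariance and cardinality assertions of the theorem follow at once.
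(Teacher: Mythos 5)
Your overall frame --- adapt \cite{1}, keep the Fuhrmann--Torres curve and the group $\PSL(2,p)\times C_{\frac{p+1}{2}}$, and build the hemisystem as a union of line-orbits checked against point-orbits --- does match the paper's architecture, which imports conditions (A)--(E) of \cite{1} and only has to re-verify condition (B) at certain exceptional points. But there is a genuine gap in where you locate the hypothesis $p=1+4a^2$. You place it in the $2$-adic valuation of $p-1$ and the resulting changes in stabiliser orders and orbit splittings. That valuation only records the congruence class of $p$ modulo $8$ (it equals $2$ exactly when $a$ is odd, i.e.\ $p\equiv 5\pmod 8$), and it is shared by primes such as $29$ that are \emph{not} of the form $1+4a^2$ and for which the construction fails. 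So no amount of orbit bookkeeping driven by that invariant can single out the Landau primes, and your final ``finite arithmetic verification'' step cannot close.

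What actually happens in the paper: conditions (C), (D) and the easy instances of (E) carry over essentially as in \cite{1}, and the entire difficulty is concentrated at points $P$ whose projection onto the plane $\pi$ lies on a tangent to the conic (case (III)), where condition (E) fails and the split of the $\frac{1}{2}(q+1)$ generators of $\cG_1$ through $P$ between the two $\mathfrak{h}$-orbits $\cM_1$ and $\cM_1'$ must be computed explicitly. Parametrising these generators by $\xi\in\F_q\cup\{\infty\}$, the relevant count $r'$ is governed by how often $f(\xi)=\xi^4-48\xi^2+64$ is a square in $\F_q$, which in turn is a point count on the CM elliptic curve $Y^2=X^3-X$, namely $q+1-2\alpha_1$ with $p=\alpha_1^2+\alpha_2^2$. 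Condition (B) holds precisely when this count is $q-1$ or $q+3$, i.e.\ precisely when $\alpha_1=\pm 1$, i.e.\ precisely when $p=1+4a^2$; the parity of $a$ only affects the intermediate quadratic-residue bookkeeping (for instance whether $2$ is a square in $\F_p$, which changes the normal form of the exceptional point $P$ and several signs). A double-counting identity among orbit lengths cannot produce this number --- it is a character-sum evaluation tied to the representation of $p$ as a sum of two squares --- and that is the one ingredient your plan is missing.
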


	\section{Background on Hermitian surfaces, Maximal curves and Hemisystems}
	A canonical form of $\mathcal{U}_3$ is
\[X_0^{q+1}+X_1^{q+1}+X_2^{q+1}+X_3^{q+1}=0,\]
and the group of projectivities preserving $\mathcal{U}_3$ is
isomorphic to the projective unitary group $PGL(4,q)$ and it acts on
the points $\mathcal{U}_3$ as 2-transitive permutation group.
A hemisystem of $\cU_3$ consists of $\frac{1}{2}(q^3+1)(q+1)$ generators of $\cU_3$, exactly $\frac{1}{2}(q+1)$ of them through each point on $\cU_3$.
Up to a change of the projective frame in $ \PG(3, \fqs) $, the equation of $ \cU_3 $ may also be written in the form
	\[\cU_3 \colon X_1^{q+1}+2X_2^{q+1}-X_3^qX_0-X_3X_0^q=0.\]

As it is customary in algebraic geometry, the geometric objects are
viewed over the algebraic closure $\mathbb{K}$ of $q^2$.
An algebraic curve defined over $ \fqs $ means a projective, geometrically irreducible, non-singular algebraic curve $ \cX $ of $ \PG(3, q^2) $ viewed as a curve of $ \PG(3, \mathbb{K}) $, where $\mathbb{K}$ is the algebraic closure of $\mathbb{F}_q$. The curve $\cX$ is $\fqs$-maximal when the number $N_{q^2}$ of its points attains the Hasse-Weil upper bound, namely $N_{q^2}=q^2+1+2q g (\cX)$, where $g(\cX)$ is the genus of $\cX$.
In $\PG(2,\mathbb{K})$ with homogeneous coordinates $(x,y,z)$, the
Fuhrmann-Torres is the plane curve $\cF^+$ of genus
$\frac{1}{4}(q-1)^2$ with equation
    \[\cF^+\colon y^q-yz^{q-1}=x^\frac{q+1}{2}z^\frac{q+1}{2}.\]
The morphism \[\varphi \colon \cF^+ \to \PG(3, \mathbb{K}), \quad
(x,y,z)\mapsto (z^2,xz,yz,y^2 )\] defines an embedding (called natural
embedding) of $\cF^+$ which is a $q+1$ degree curve $\cX^+$  whose
points (including those defined over $\mathbb{K}$) are contained in
$\mathcal{U}_3$. In particular, $\cF^+$ is an $\fqs$-maximal curve.
 The twin Fuhrmann-Torres curve is defined
by the equation
\[\cF^-\colon y^q-yz^{q-1}=-x^\frac{q+1}{2}z^\frac{q+1}{2}.\]
and the above claims remain valid with respect to the same morphism. For more details see \cite{FT}.

Some useful properties of the Furhrmann-Torres curve, also valid for
any $\fqs$-maximal curve $\cX$ naturally embedded in $\mathcal{U}_3$,
are collected in the following results  obtained in \cite[Section
2,3,4]{1}.\\

	Thus, $\cX^+$ is a $q+1$ degree curve lying in the Hermitian surface $\cU_3$. Furthermore $\cX^+(\fqs)$ is partitioned in $\Omega$ and $\cX^+(\fqs) \setminus \Omega=\Delta^+$, where $\Omega$ is the set cut out on $\cX^+$ by the plane $\pi\colon X_1=0$. Note that $|\Omega|=q+1$ and $|\Delta^+|=\frac{1}{2}(q^3-q)$.

	Equivalently $\Omega$ is the intersection in $\pi$ of the conic $ \cC $ with equation $X_0X_3-X_2^2=0$ and the Hermitian curve $\cH(2,q^2)$ with equation $X_0^qX_3+X_0X_3^q-2X_2^{q+1}=0$.\\
	Moreover, the above properties hold true for when $^+$ is replaced by $^-$	and $\cX^-$ is a non singular model for the plane curve:
	\[\cF^- \colon y^q-yz^{q-1}=-x^\frac{q+1}{2}z^\frac{q+1}{2}. \]
	The curves $\cX^+$ and $\cX^-$ are isomorphic over $\fqs$ and $\Omega$ is the set of common points of $\cX^+$ and $\cX^-$.
	
	The curves $\cF ^\pm $ are known as the Fuhrmann-Torres maximal curves and they have their own specific interest.
		
We use classical terminology regarding rational curves. In particular, a (real) chord of $\cX$ is a line in $ \PG(3,q^2) $ which meets $ \cX(\fqs) $ in at least two distinct point, whereas an imaginary chord of $\cX$ is a line in $\PG(3,q^2)$ joining a point $P \in \cX(\mathbb{F}_{q^4})\setminus\cX(\fqs)$ to its conjugate, that is, its Frobenius image.
	
	\section{Previous results}
	
	We report a number of the results from \cite[Section 2,3,4]{2}, which are useful for our construction of Hemisystems based on the Fuhrmann-Torres maximal curves.
	
	From now $p \equiv 1 \pmod 4 $.
	
	Let $ \cH$ denote the set of all imaginary chords of $\cX$. Furthermore, for a point $P \in \PG(3,\fqs)$ lying in $\cU_3 \setminus \cX(\fqs)$, let $n_P(\cX)$ denote the number of generators of $\cU_3$ through $P$ which contain an $\fqs$-rational point of $\cX$.
	
	\begin{definition}
		A set $\cM$ of generators of $\cU_3$ is an \textit{half-hemisystem} on $\cX$ if the following properties hold:
		\begin{itemize}
			\item[(A)] Each $\fqs$-rational points of $\cX$ is incident with exactly $\frac{1}{2}(q+1)$  generators in $\cM$.
			\item[(B)] For any point $P \in \cU_3 \setminus \cX(\fqs)$ lying in $\PG(3, \fqs)$, $ \cM $ has as many as $\frac{1}{2}n_P(\cX)$ generators through $P$ which contain an $\fqs$-rational point of $\cX$.
		\end{itemize}
	\end{definition}
	
	Note that $\cM$ consists of $\frac{1}{2}(q+1)N_{q^2}$ generators and $\cH$ of $\frac{1}{2}(q^2+q)(q^2-q-2g(\cX))$ generators of $\cU_3$. Therefore $\cM \cup \cH$ has exactly $\frac{1}{2}(q^3+1)(q+1)$ generators of $\cU_3$.

	\begin{result}{\cite[Proposition 4.1]{1}}
		$\cM \cup \cH$ is a hemisystem of $\cU_3$.
	\end{result}

	Let $\mathfrak{G}$ be a subgroup of $\aut(\cX)$ and $o_1,\dots,o_r$ the $ \mathfrak{G} $-orbits on $\cX(\fqs)$.
	Moreover, for $1 \leq j \leq r$, let $\cG_j$ denote the set of all generators of $\cU_3$ meeting $o_j$. Note that $\mathfrak{G}$ leaves each $\cG_j$ invariant.
	
	\begin{result} \label{CDE}
		With the above notation, assume that the subgroup $\mathfrak{G}$ fulfils the hypothesis:
		\begin{itemize}
			\item[(C)] $\mathfrak{G}$ has a subgroup $\mathfrak{h}$ of index $2$ such that $\mathfrak{G}$ and $\mathfrak{h}$ have the same orbits $o_1,\dots,o_r$ on $\cX(\fqs)$.
			
			\item[(D)]For any $1\leq j \leq r$, $ \mathfrak{G} $ acts transitively on $\cG_j$ while $\mathfrak{h}$ has two orbits on $\cG_j$.
			
		\end{itemize}
		Let $P\notin \cX(\fqs)$ be a point lying on a generator in $\cG$, if
		\begin{itemize}
			\item[(E)] there is an element in $\mathfrak{G}_P$ not in $\mathfrak{h}_P$,
		\end{itemize}
		then $P$ satisfies \emph{(B)}.
	\end{result}
	
	Let $\cG$ be the set of all generators meeting $\cX^+$. From \cite[Lemma 5.1]{1} $\cG$ is  also the set of all generators meeting $\cX^-$. In particular, $\cG$ splits into two subset \begin{equation}\label{rk:G}
			\cG=\cG_1 \cup \cG_2,
		\end{equation}
		where $\cG_2$ is the set of the $(q+1)^2 $ generators meeting $\Omega$, while $\cG_1$ is the set of the $\frac{1}{2}(q^3-q)(q+1)$ generators meeting both $\Delta^+$ and $\Delta^-$.
		Thus, the following characterization of $\cG$ is very useful.
		
		\begin{result}{\cite[Lemma 5.3]{1}}\label{lm:G1}
			The generator set $\cG_1$ consists of all the lines $g_{u,v,s,t}$ spanned by the points $P_{u,v}=(1,u,v,v^2) \in \Delta^+ $ and $Q_{s,t}=(1,s,t,t^2) \in \Delta^-$ lying on  \[\cF \colon F(v,t)=(v+t)^{q+1}-2(vt+(vt)^q)=0\]	and \[u^{\frac{q+1}{2}}=v^q-v, \quad -s^\frac{q+1}{2}=t^q-t, \quad u^qs = (t-v^q)^2.\]
		\end{result}
	
\begin{result}{\cite[Lemma 5.4]{1}}
	$ \aut(\cF) $ contains a subgroup $\Psi \cong \PGL(2,q)$ that acts faithfully on the set $ \cF(\fqs)\setminus \cF (\bfq) $ as a sharply transitive permutation group.
\end{result}
\subsection{Automorphisms preserving $\cG$ and $\cX^+$}
	
In this subsection we recall the main results about the group-theoretic properties involving, $\cX^+$, $\cX^-$ and $\cG$; see \cite[Section 5]{1}.
The authors showed that $\Psi$ contains a subgroup $\Gamma$ which acts sharply transitively on $\cG_1$. Furthermore, $\Gamma$ has a unique index $2$ subgroup $\Phi$ such that \[\Phi \cong PSL(2,q) \times C_\frac{q+1}{2}.\]
In particular, $\Phi$ has two orbits on $\cG_1$, namely $\cM_1$ and $\cM_2$.

	In terms of subgroups of $\PGU(4,q)$ we have the following characterization.	
		\begin{result}\cite[Lemma 5.7]{1}
			The group $\PGU(4,q)$ has a subgroup $\mathfrak{G}$ with the following properties:
			\begin{itemize}
				\item[(i)]  $\mathfrak{G}$ is an automorphism group of $\cX^+$ and $\cX^-$;
				\item[(ii)]  $\mathfrak{G}$ preserves the point-sets $\Delta^+$, $\Delta^-$, $\Omega$ and $\cG_1$;
				\item[(iii)]  $\mathfrak{G}$ acts faithfully on $\Delta^+$, $\Delta^-$ and $\cG_1$;
				
				\item[(iv)]  $\mathfrak{G}$ acts on $\Omega$ as $\PGL(2,q)$ in its $3$-transitive permutation representation;
				\item[(v)] The collineation group induced by $\mathfrak{G}$ on $\pi$ is $\mathfrak{G}\textfractionsolidus Z(\mathfrak{G}) \cong \PGL(2,q)$ with $ Z(\mathfrak{G}) \cong C_\frac{q+1}{2} $.
			\end{itemize}
			Furthermore, $ 	\mathfrak{G} $ has an index $2$ subgroup $\mathfrak{h}$ isomorphic to $\PSL(2,q) \times C_\frac{q+1}{2}$.
		\end{result}
		With the above notation, in the isomorphism $\mathfrak{G} \cong \Gamma $, $\mathfrak{h}$ and $\Phi$ correspond.

		\begin{result}{\cite[Lemma 5.9]{1}.}
			The element of order $2$ in $ \mathfrak{h} $ are skew perspectivities, while those in $ \mathfrak{G} \setminus \mathfrak{h}   $ are homologies. Furthermore, the linear collineation $ \mathfrak{w} $, defined by
			\[\textbf{\emph{W}}:=\begin{pmatrix}
				1&0&0&0\\
				0&-1&0&0\\
				0&0&1&0\\
				0&0&0&1
			\end{pmatrix},\]
		interchanges $\cX^+$ with $\cX^-$ and the linear group generated by $\mathfrak{G}$ and $\mathfrak{w}$ is the direct product $ \mathfrak{G} \times \mathfrak{w} $.
		\end{result}
		
		\begin{result}
			$ \mathfrak{G} $ acts transitively on $\cG_2$ while $\mathfrak{h}$ has two orbits on $ \cG_2 $.
		\end{result}
From the result of this section, the following theorem follows				
\begin{theorem}\label{th:CD}
			Condition \emph{(C)} and \emph{(D)} are fulfilled for $\cX=\cX^+$, with $\Gamma=\mathfrak{G}$ and $\Phi=\mathfrak{h}$.
\end{theorem}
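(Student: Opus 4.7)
The plan is to verify conditions (C) and (D) using the $\mathfrak{G}$-invariant partition $\cX^+(\fqs)=\Omega\sqcup\Delta^+$ of the $\fqs$-rational locus of $\cX^+$. I would first show that each of $\Omega$ and $\Delta^+$ is a single orbit of both $\mathfrak{G}$ and $\mathfrak{h}$, which is precisely (C). Once the orbits $o_j$ are identified with $\Omega$ and $\Delta^+$, the associated generator sets coincide with the $\cG_2$ and $\cG_1$ of equation~\eqref{rk:G}, and (D) can then be read off from the preceding results.

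On $\Omega$ the argument is immediate: property (iv) says $\mathfrak{G}$ induces $\PGL(2,q)$ on $\pi$, hence on $\Omega\subset\pi$; the kernel $Z(\mathfrak{G})$ lies in $\mathfrak{h}$, so $\mathfrak{h}$ induces $\PSL(2,q)$, and both groups are transitive on the $q+1$ points of $\Omega$. On $\Delta^+$, Result~\ref{lm:G1} combined with the counting identity $(q+1)|\Delta^+|=|\cG_1|$ shows that every generator $g_{u,v,s,t}\in\cG_1$ meets $\Delta^+$ in the single point $P_{u,v}$, so the assignment $g_{u,v,s,t}\mapsto P_{u,v}$ is a $(q+1)$-to-one $\mathfrak{G}$-equivariant surjection $\cG_1\to\Delta^+$. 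Together with the sharp transitivity of $\Gamma\cong\mathfrak{G}$ on $\cG_1$, this yields transitivity of $\mathfrak{G}$ on $\Delta^+$. Since $[\mathfrak{G}:\mathfrak{h}]=2$, the subgroup $\mathfrak{h}$ is either transitive on $\Delta^+$ or splits it into two equal-size orbits; the latter is equivalent to $\mathfrak{G}_P\subseteq\mathfrak{h}$ for $P\in\Delta^+$. I would rule it out by producing an element of $\mathfrak{G}_P\setminus\mathfrak{h}$: the cited description of $\mathfrak{G}\setminus\mathfrak{h}$ as consisting of homologies of $\PG(3,\fqs)$, together with the fact that such a homology fixes an entire plane which meets $\cX^+$ nontrivially, yields the desired fixed point inside $\Delta^+$ by a degree count.

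With (C) in hand, (D) is assembled from the earlier results: the last cited result gives $\mathfrak{G}$ transitive on $\cG_2$ with $\mathfrak{h}$ having two orbits there, while the preceding subsection yields $\Gamma\cong\mathfrak{G}$ sharply transitive on $\cG_1$ with $\Phi\cong\mathfrak{h}$ acting with the two orbits $\cM_1,\cM_2$.

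The main obstacle is the transitivity of $\mathfrak{h}$ on $\Delta^+$: excluding the splitting scenario amounts to exhibiting a nontrivial element of $\mathfrak{G}_P\setminus\mathfrak{h}$ for some $P\in\Delta^+$, for which one must check that a homology in the non-trivial coset $\mathfrak{G}\setminus\mathfrak{h}$ actually has a fixed point lying in $\Delta^+$, rather than only in $\Omega$ or in $\cX^+(\mathbb{F}_{q^4})\setminus\cX^+(\fqs)$.
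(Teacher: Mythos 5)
Your overall architecture matches the paper's: Theorem \ref{th:CD} is stated there with no independent argument, as a summary of the results quoted from \cite{1}, and your treatment of condition (D) --- sharp transitivity of $\Gamma\cong\mathfrak{G}$ on $\cG_1$ with the two $\Phi$-orbits, combined with the quoted result that $\mathfrak{G}$ is transitive on $\cG_2$ while $\mathfrak{h}$ has two orbits there --- is exactly that assembly. Your verification of (C) on $\Omega$ via property (iv) together with $Z(\mathfrak{G})\le\mathfrak{h}$ is sound, as is the $(q+1)$-to-one $\mathfrak{G}$-equivariant surjection $\cG_1\to\Delta^+$, $g_{u,v,s,t}\mapsto P_{u,v}$, which gives transitivity of $\mathfrak{G}$ on $\Delta^+$.

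The gap is the one you flag yourself, and it is genuine: the ``degree count'' does not produce an element of $\mathfrak{G}_P\setminus\mathfrak{h}$ for $P\in\Delta^+$. A homology of $\PG(3,q^2)$ fixes pointwise a plane and a centre, but the points it fixes on $\cX^+$ need not include any point of $\Delta^+$: they could all lie in $\Omega$, or in $\cX^+(\mathbb{F}_{q^4})\setminus\cX^+(\fqs)$, or the relevant fixed point could be the centre rather than a point of the axis. The paper itself supplies a cautionary instance: the linear collineation $\mathfrak{w}$ is a homology whose axis is the plane $\pi\colon X_1=0$, and $\pi$ cuts $\cX^+$ in exactly the $q+1$ points of $\Omega$, so ``the axis meets $\cX^+$ nontrivially'' is fully compatible with the fixed-point set on $\cX^+(\fqs)$ being disjoint from $\Delta^+$. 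Consequently the alternative in your dichotomy --- $\mathfrak{G}_P\subseteq\mathfrak{h}$ for $P\in\Delta^+$, i.e.\ $\Delta^+$ splitting into two $\mathfrak{h}$-orbits --- has not been excluded. Closing this requires the explicit description of the involutions (and, more generally, of the point stabilizers) in $\mathfrak{G}\setminus\mathfrak{h}$ as in \cite[Lemma 5.9 and Section 5]{1}, checking that their fixed-point sets actually meet $\Delta^+$; a purely qualitative appeal to ``homologies fix a plane'' is not enough.
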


	More precisely,	$ \cG=\cG_1 \cup \cG_2 $ with $ \cG_1=\cM_1 \cup \cM_1'$ and $\cG_2=\cM_2 \cup \cM_2'$, where $\cG_1$ and $\cG_2$ are the $\mathfrak{G}$-orbits on $\cG$ whereas $\cM_1$, $\cM_1'$, $ \cM_2 $, $ \cM_2' $ are the $\mathfrak{h}$-orbits on $\cG_1$ and $\cG_2$ respectively. This notation fits with \cite[Section 5]{1}.

\subsection{Points satisfying Condition (E)}$ $
	
		The plane $\pi\colon X_1=0$ can be seen as the projective plane $\PG(2,q^2)$, with homogeneous coordinates $(X_0,X_2,X_3)$. Then $\cC$ is the conic of equation $X_0X_3-X_2^2=0$ and $\Omega$ is the set of point of $\cC$ lying in the (canonical Baer) subplane $\PG(2,q)$.\\
	The points in $\PG(2,q^2) \setminus \PG(2,q)$ are of three types with respect the lines of $\PG(2,q)$, i.e.
		\begin{itemize}
			\item[(I)] Point on a unique line disjoint from $\Omega$ which meets $\cC$ in two distinct points both in $\PG(2,q^2) \setminus \PG(2,q)$;
			\item[(II)] Point on a unique line meeting $\Omega$ in two distinct points;
			\item[(III)] Point on a unique line which is tangent to $\cC$ with tangency point on $\Omega$.
		\end{itemize}
		Points of type (I) - (II) and points in $\PG(2,q)$ satisfy condition (B), as can be readily seen in the next result.
		\begin{result}\label{th:I-II}
			If the projection of $P \in \cU_3$ on $\pi$ is a point $P'$ of type \emph{(I)} - \emph{(II)} or $P'\in \PG(2,q)$, then condition \emph{(E)} is fulfilled for $\cX=\cX^+$, $ \Gamma=\mathfrak{G} $ and $ \Phi=\mathfrak{h} $.
		\end{result}
		
		\section{Condition (B) for case (III) and $p \equiv 5 \pmod 8$}
	Condition (B) is rarely satisfied in Case (III), that is, for points $ P $ whose projection from $ X_\infty=(0,1,0,0) $ on $\pi$ is a point $ P' $ lying on a tangent $ l $ to $ \cC $.
	Our goal is to show that \cite[Theorem 7.1]{2}, proven for $q \equiv 1 \pmod 8$, remains true for \begin{center}
		$ p \equiv 5 \pmod 8$,
	\end{center}
extending their results to the case $p \equiv 1 \pmod 4$.	

From now on we assume $p \equiv 5 \pmod 8$.	
		\begin{theorem}\label{th:main}
			Condition $ (B) $ for Case $ (III) $ is satisfied if and only if the number $ N_q  $ of $ \bfq $-rational points of the elliptic curve with affine equation $ Y^2 = X^3 - X $ equals either $ q - 1 $, or $ q +3 $.
		\end{theorem}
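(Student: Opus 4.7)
Since Theorem \ref{th:CD} has already established Conditions (C) and (D) for $\cX = \cX^+$, $\Gamma = \mathfrak{G}$, $\Phi = \mathfrak{h}$, Result \ref{CDE} reduces the task to verifying Condition (E) for every point $P \in \cU_3$ whose projection $P'$ on $\pi$ falls in Case (III). The plan is to imitate step by step the argument of \cite[Section 7]{2}, where the analogous verification was carried out under the assumption $p \equiv 1 \pmod 8$, adjusting each computation to the present congruence $p \equiv 5 \pmod 8$.

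The first step is explicit coordinatization. Exploiting the $3$-transitive action of $\mathfrak{G}$ on $\Omega$ as $\PGL(2,q)$, I would fix a convenient tangency point $T \in \Omega$ together with its tangent line $l$ to $\cC$, parametrize the points $P' \in l \setminus \{T\}$ by an affine coordinate $\lambda \in \bfq^*$, and lift $P'$ to a family $P = P(\lambda) \in \cU_3$ via the projection from $X_\infty = (0,1,0,0)$. The stabilizer $\mathfrak{G}_{P(\lambda)}$ is then computed through the extension $Z(\mathfrak{G}) \to \mathfrak{G} \to \PGL(2,q)$ with $Z(\mathfrak{G}) \cong C_{(q+1)/2}$: the preimage of the Borel subgroup of $\PGL(2,q)$ fixing $T$ preserves $l$, and the extra requirement that $P(\lambda)$ be fixed cuts out a small system of equations in the matrix parameters and in $\lambda$. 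Since involutions of $\mathfrak{h}$ are skew perspectivities while those of $\mathfrak{G} \setminus \mathfrak{h}$ are homologies, Condition (E) becomes the existence of a homology inside $\mathfrak{G}_{P(\lambda)}$, which is an explicit algebraic condition on $\lambda$.

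The main obstacle will be to extract the arithmetic content of that condition. I expect that, once the matrix parameters are eliminated, the condition on $\lambda$ becomes equivalent, via a suitable birational substitution, to the solvability over $\bfq$ of $Y^2 = X^3 - X$; the hypothesis $p \equiv 5 \pmod 8$ enters here exactly because it controls which square roots (of $2$ and of $-1$) lie in $\bfq$, and that is what causes the case split to differ from the one in \cite{2}. Letting $P(\lambda)$ range over Case (III) and comparing the number of $\lambda$'s for which this solvability holds against the total number of Case (III) points, Condition (B) translates into a linear identity in $N_q = |E(\bfq)|$. Working out the constants in this identity carefully should establish both directions of the equivalence: Condition (E) can be fulfilled at every Case (III) point precisely when the identity holds, and the identity holds precisely when $N_q \in \{q-1,\,q+3\}$.
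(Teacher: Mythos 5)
There is a genuine gap: your plan rests on verifying Condition (E) at Case (III) points, but that is precisely the route that fails here and is not what the paper does. The whole point of this section is that for a point $P$ whose projection lies on a tangent to $\cC$, the stabilizer $\mathfrak{G}_P$ generally contains no element of $\mathfrak{G}\setminus\mathfrak{h}$ (no homology), so (E) cannot be invoked and Condition (B) must be checked by direct counting. This is also visible in the shape of the statement: (E) is merely a sufficient criterion for (B) at a point, and no stabilizer computation could produce an \emph{if and only if} criterion in terms of the point count $N_q$ of $Y^2=X^3-X$; the arithmetic obstruction lives elsewhere.

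Concretely, the paper's mechanism is as follows, and none of it appears in your plan. Using transitivity of $\mathfrak{G}$ on $\Omega$ and Theorem \ref{th:I-II}, everything reduces to the two points $P^{\pm}=(\pm 2,h,0)$ with $h^2=2$. Through such a point there pass exactly $\frac{1}{2}(q+1)$ generators of $\cG_1$ meeting $\cX^+$ (Lemma \ref{lm:soluz}) plus one generator of $\cG_2$, so $n_P=\frac{1}{2}(q+3)$. These $\cG_1$-generators are parametrized by $\xi\in\bfq\cup\{\infty\}$ via fractional linear maps from a base generator $g_0$, and the decisive question is how they distribute between the two $\Phi$-orbits $\cM_1$ and $\cM_1'$: writing $r+r'=\frac{1}{2}(q+1)$, membership is governed by whether $f(\xi)=\xi^4-48\xi^2+64$ is a square in $\bfq$ (the quadratic-residue conditions of Lemmas \ref{quad} and \ref{prin}), giving $2r'-1$ equal to the number of $\xi$ with $f(\xi)\in\square_q$. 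Condition (B) at $P^{\pm}$ then forces $\{r,r'\}=\{\frac{1}{4}(q-1),\frac{1}{4}(q+3)\}$, i.e.\ a point count on the quartic $Y^2=f(X)$, which is birationally $Y^2=X^3-X$. So the elliptic curve enters through the orbit distribution of the generators through a \emph{fixed} point, not, as you propose, through letting $P(\lambda)$ range over all Case (III) points and summing a solvability condition; Condition (B) is a per-point constraint and cannot be traded for a global linear identity in $N_q$ over the Case (III) locus. To repair the proposal you would need to abandon the stabilizer/homology computation and instead carry out the generator parametrization and the square/non-square bookkeeping for $p\equiv 5\pmod 8$, which is where the roots of $2$ and $-1$ actually intervene.
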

		
		We need few steps before to prove Theorem \ref{th:main}. To begin with, we have to prove the following theorem.
		
		\begin{theorem}\label{th:III}
			Let $n_q$ be the number of $\xi \in \bfq $ for which $ f(\xi) = \xi^4-48\xi^2+64  $ is a square in $ \bfq $.
			Condition $ (B) $ for Case $ (III) $ is satisfied if and only if  $ n_q $ equals either $\frac{1}{2} (q+1)$ or $\frac{1}{2}(q-3)$.
		\end{theorem}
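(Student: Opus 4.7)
The plan is to deduce Theorem~\ref{th:III} from Theorem~\ref{th:main} by establishing the identity
\[
N_q \;=\; 2q - 2 n_q,
\]
which immediately converts $N_q \in \{q-1,\, q+3\}$ into $n_q \in \{(q+1)/2,\, (q-3)/2\}$. The proof of the identity combines a character-sum computation expressing $n_q$ in terms of the point count on an auxiliary elliptic curve $E'$, together with an isogeny/twist step that relates $E'$ back to $E : Y^2 = X^3 - X$.

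Let $\chi$ denote the quadratic character of $\bfq$ extended by $\chi(0) = 0$. I would first observe the factorisation $f(\xi) = (\xi^2 - 8\xi + 8)(\xi^2 + 8\xi + 8)$, whose roots are $\pm(4 \pm 2\sqrt{2})$; under $p \equiv 5 \pmod 8$ we have $\chi(2) = -1$, so $f$ has no root in $\bfq$. Consequently,
\[
2 n_q \;=\; \sum_{\xi \in \bfq} \bigl(1 + \chi(f(\xi))\bigr) \;=\; q + S, \qquad S := \sum_{\xi \in \bfq} \chi(f(\xi)).
\]
Setting $h(\eta) := \eta^2 - 48\eta + 64$ so that $f(\xi) = h(\xi^2)$, the reparameterization $\eta = \xi^2$ combined with the indicator $(1 + \chi(\eta))/2$ for the nonzero squares yields
\[
S \;=\; \sum_{\eta \in \bfq} \chi(h(\eta)) \;+\; \sum_{\eta \in \bfq} \chi\bigl(\eta\, h(\eta)\bigr).
\]
The first sum equals $-1$ by the standard formula for $\sum \chi$ of a quadratic with nonzero discriminant (here $\mathrm{disc}(h) = 2^{11}$); the second sum equals $N_q(E') - q - 1$, where $E' : Y^2 = \eta(\eta^2 - 48\eta + 64)$, by the usual expression for point counts on a Weierstrass cubic in terms of $\chi$. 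Hence $2 n_q = N_q(E') - 2$.

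For the twist step, $(0,0)$ is a rational $2$-torsion point on $E'$, and the classical $2$-isogeny $y^2 = x(x^2 + ax + b) \mapsto y^2 = x(x^2 - 2ax + a^2 - 4b)$ sends $E'$ to $y^2 = x(x^2 + 96 x + 2048)$; the translation $x = u - 32$ then puts it in short Weierstrass form $\widetilde E : Y^2 = u^3 - 1024\,u$. Isogenous curves over $\bfq$ have equal point counts, so $N_q(E') = N_q(\widetilde E)$, and $\widetilde E = E_{1024}$ is a twist of $E$ with parameter $d = 1024 = 32^2$. Since $p \equiv 5 \pmod 8$ forces $32 = 2^5$ to be a non-square, $1024$ is a non-fourth-power square, and $\widetilde E$ is the \emph{non-trivial quadratic twist} of $E$; consequently $N_q(\widetilde E) = 2(q+1) - N_q$, which gives $2 n_q = 2q - N_q$ as required. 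The genuinely delicate step is this final twist identification: a priori $1024$ could be either a fourth power (making $\widetilde E \cong_{\bfq} E$) or a non-square (pairing $\widetilde E$ with a genuine quartic twist whose trace involves the $b$ in $p = a^2 + b^2$), and both possibilities are excluded precisely by $\chi(2) = -1$, i.e.\ by the hypothesis $p \equiv 5 \pmod 8$.
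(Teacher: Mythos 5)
Your character-sum and isogeny computation is correct as far as it goes: $f(\xi)=(\xi^2-8\xi+8)(\xi^2+8\xi+8)$ has no root in $\bfq$ when $2\notin\square_q$, the substitution $\eta=\xi^2$ gives $2n_q=N_q(E')-2$ for $E'\colon Y^2=\eta(\eta^2-48\eta+64)$, and the $2$-isogeny followed by the translation $x=u-32$ does land on $Y^2=u^3-1024u$, which is the nontrivial quadratic twist of $Y^2=X^3-X$ because $1024=32^2$ with $32=2^5$ a non-square for $p\equiv 5\pmod 8$. This yields $N_q=2q-2n_q$ and hence the equivalence of the two conditions ``$N_q\in\{q-1,q+3\}$'' and ``$n_q\in\{\tfrac{1}{2}(q+1),\tfrac{1}{2}(q-3)\}$''.

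However, this does not prove Theorem \ref{th:III}: it only shows that Theorems \ref{th:III} and \ref{th:main} are equivalent, and you then invoke Theorem \ref{th:main} as if it were already established. In the paper the logical order is the reverse: Theorem \ref{th:III} is proved first, directly from the geometry, and Theorem \ref{th:main} is then obtained from it by exactly the kind of curve comparison you carry out. So, in context, your argument is circular. The actual content of Theorem \ref{th:III} is the combinatorial chain that your proposal omits entirely: the $n_P=\tfrac{1}{2}(q+3)$ generators through $P=(2\varepsilon,h,0)$ split into the $\tfrac{1}{2}(q+1)$ generators of $\cG_1$, parametrized by $\xi$ ranging over $\Sigma_1\cup\{\infty\}$ or $\Sigma_2$, plus one generator of $\cG_2$; the number $r'$ of $\cG_1$-generators through $P^+$ lying in the $\Phi$-orbit $\cM_1'$ satisfies $n_q=2r'-1$ (Remark \ref{rk:f(x)}), because membership in $\cM_1$ versus $\cM_1'$ is governed by whether $f(\xi)$ is a square; and condition (B) holds precisely when half of the $n_P$ generators lie in $\cM_1\cup\cM_2$, i.e.\ when $r'\in\{\tfrac{1}{4}(q-1),\tfrac{1}{4}(q+3)\}$, which translates into $n_q\in\{\tfrac{1}{2}(q-3),\tfrac{1}{2}(q+1)\}$. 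None of this can be recovered from point counts on elliptic curves alone. What you have actually written is a clean, self-contained proof of the implication Theorem \ref{th:III} $\Rightarrow$ Theorem \ref{th:main} --- a step the paper only sketches by appealing to the quartic model $\cC_4$ and to the end of Section 7 of the reference --- and it would be a worthwhile addition there; but as a proof of Theorem \ref{th:III} itself it is missing the entire geometric argument.
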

		
		The proof of Theorem \ref{th:III} is carried out by a series of lemmas.
		
		Since $q\equiv 5 \pmod 8$, unfortunately $2$ is not a square in $\bfq$. Therefore the proof is carried out significantly differently.
		
		Let $h$ and $-h$ be the roots of $2$ in $\fqs$.
		In particular we have that $h^q+h =0$ and $(\pm h)^{q+1} = -2$.\\
		Moreover $h^{1/2}=\alpha $, with $\alpha^2 =-1$. Thus, $\alpha \notin \square_q$ and $(1+\alpha)(1-\alpha)=2 \notin \square_q$.\\
		Since $ \mathfrak{G} $ is transitive on $\Omega$, the point $ O =(1, 0, 0, 0) $ may be assumed to be the tangency point of $ l $. Then $l$ has equation $ X_1 =0, X_3 =0 $, and $ P =(a_0, a_1, a_2, 0) $ with  $a_1\neq0$ and $a_1^{q+1}+2a_2^{q+1}=0$. If $ a_0 =0 $ then $ P =(0, d, 1, 0) $ with $ d^{q+1} +2 =0 $ and  his projection to $\pi$ is $ P'=(0, 0, 1, 0) $, which is a point in $ \PG(2, \bfq) $. By Theorem \ref{th:I-II} the case $ a_0 =0 $ can be dismissed and $ a_0 =1 $ may be assumed.\\
		Therefore, we may limit ourselves to the point $P=(a,b,0)$ such that $a^{q+1}+2 b^{q+1}=0$. In this case, the latter equation holds for $a=\pm h^2 $ and $b= h$. From this		
		\[P=(2\varepsilon, h, 0 ), \quad \mbox{ where } \varepsilon \in \{-1,1\}\]
		and we can carry out the computation simultaneously.
		
		\subsection{Case of $\mathcal{G}_1$}
		
		We keep up our notation $P_{u,v}=(u,v,v^2)$ for a point in $\Delta^+$. The following lemmas are the analogues of those in \cite[section 7.1]{2} for the case
		
		\begin{lemma}
			Let $v \in \fqs \setminus \bfq $. Then there exists $u \in \fqs$ such that the line joining $P$ at $P_{u,v}$ is a generator of $\mathcal{U}_3$ if and only if
			
			\begin{equation}\label{v}
				(v^2+2hv)^\frac{q+1}{2} = 2\varepsilon(v^q-v)
			\end{equation}
			
			If \emph{(\ref{v})} holds, then $u$ is uniquely determined by $v$.
		\end{lemma}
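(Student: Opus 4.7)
The plan is to impose the two defining requirements on $(u,v)$ separately: first, that the line $PP_{u,v}$ is contained in $\cU_3$; second, that the point $P_{u,v}$ actually lies on $\cX^+$. In the chosen coordinates, the Hermitian form polarizing the equation of $\cU_3$ is
\[H(X,Y)=X_1Y_1^q+2X_2Y_2^q-X_0Y_3^q-X_3Y_0^q,\]
and the line joining two points of $\cU_3$ is contained in $\cU_3$ if and only if $H$ vanishes on the pair. I would evaluate $H(P,P_{u,v})$ on $P=(1,2\varepsilon,h,0)$ and $P_{u,v}=(1,u,v,v^2)$, obtaining the single relation
\[2\varepsilon\,u^q+2h\,v^q-v^{2q}=0.\]
This pins down $u^q$ and hence, after applying the $q$-th power once more (so $u^{q^2}=u$) together with $h^q=-h$, $\varepsilon^q=\varepsilon$ and $2^q=2$, one obtains
\[u=\frac{v^2+2hv}{2\varepsilon}.\]
In particular the uniqueness claim of the lemma is immediate: once $v$ is fixed, the generator condition determines $u$ with no freedom left.

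The second step is to enforce that $P_{u,v}$ lies on $\cX^+$, namely the curve relation $u^{(q+1)/2}=v^q-v$. Substituting the expression for $u$ just obtained, this becomes
\[(v^2+2hv)^{(q+1)/2}=(2\varepsilon)^{(q+1)/2}(v^q-v).\]
To match equation \eqref{v} it remains to simplify $(2\varepsilon)^{(q+1)/2}$. Since $q\equiv 5\pmod 8$, the integer $(q+1)/2$ is odd, so $\varepsilon^{(q+1)/2}=\varepsilon$. Moreover $2$ is a non-square in $\bfq$, again by $q\equiv 5\pmod 8$, so $2^{(q+1)/2}=-2$ (one may also read this off directly from $h^{q+1}=-2$). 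Folding these signs together recovers \eqref{v}, with the overall sign harmless because the parameter $\varepsilon\in\{-1,+1\}$ already ranges over both signs.

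I do not foresee a real obstacle. The whole argument reduces to a direct calculation with the Hermitian polarization of $\cU_3$ and the defining equation of $\cX^+$; the only delicate point is the sign bookkeeping enabled by $q\equiv 5\pmod 8$, namely the parity of $(q+1)/2$ and the quadratic character of $2$ in $\bfq$.
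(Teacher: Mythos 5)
Your argument is correct and is essentially the paper's proof made explicit: the paper's condition that $P_{u,v}$ lie on the tangent plane to $\cU_3$ at $P$ is exactly the vanishing of your polarized Hermitian form, which forces $u=\frac{v^2+2hv}{2\varepsilon}$ (hence the uniqueness claim), and imposing the curve relation $u^{\frac{q+1}{2}}=v^q-v$ then produces the condition on $v$ — a substitution step the paper leaves implicit. The one point to watch is the sign: since $(2\varepsilon)^{\frac{q+1}{2}}=-2\varepsilon$, your computation actually yields $(v^2+2hv)^{\frac{q+1}{2}}=-2\varepsilon(v^q-v)$, which for a \emph{fixed} point $P=(2\varepsilon,h,0)$ is equation \eqref{v} with $\varepsilon$ replaced by $-\varepsilon$; this is not literally ``recovered'' by observing that $\varepsilon$ ranges over both values, though it is harmless for the rest of the argument because the two points $P^{\pm}$ are handled simultaneously and are merely relabelled, so you should record it as a sign correction to the displayed equation rather than absorb it silently.
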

		\begin{proof}
			The line $l=PP_{u,v}$ is a generator if and only if $P_{u,v}$ lies on the tangent plane to $\mathcal{U}_3$ at $P$. This implies
			\begin{equation}\label{u}
				u=\frac{v^2+2hv}{2\varepsilon}.
			\end{equation}
			and since $P_{u,v} \in \Delta^+$ then $u^\frac{q+1}{2}=v^q-v$ and $l$ is a generator.
			The converse follows from the proof of \cite[Lemma 7.4]{1}.
		\end{proof}
		
		Lemma \ref{v} can be extended to $Q_{s,t} \in \Delta^-$ provided that $u,v$ are replaced by
		
		\begin{equation} \label{t}
			(t^2+2ht)^\frac{q+1}{2}= -2\varepsilon(t^q-t)
		\end{equation}
		e
		\begin{equation} \label{s}
			s=\frac{t^2+2ht}{2\varepsilon}.
		\end{equation}

		Furthermore $ P,P_{u,v} $ and $ Q_{s,t} $ are collinear if and only if
		\begin{equation}
			\begin{cases}
				\begin{aligned}
					&2\varepsilon(t^2-v^2) =t^2u-v^2s \\
					&vt-h(v+t)=0
				\end{aligned}
				
			\end{cases}
		\end{equation}
		
		It follows.
		
		\begin{lemma}\label{lm:coll}
			Let $ v, t \in \fqs \setminus \bfq  $ with $ F(v, t) =0 $. If the line through $ P_{u,v} \in \Delta^+ $ and $ Q_{s,t} \in \Delta^- $ is a generator through $P$, then \begin{equation}\label{eq:coll}
				vt-h(v+t)=0.
			\end{equation}
		\end{lemma}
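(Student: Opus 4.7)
The plan is to read off the conclusion as one of the $3\times 3$ minor conditions for the three homogeneous points
\[
P=(1,2\varepsilon,h,0),\quad P_{u,v}=(1,u,v,v^2),\quad Q_{s,t}=(1,s,t,t^2)
\]
to be collinear. Collinearity amounts to the $3\times 4$ coordinate matrix having rank at most $2$, i.e.\ to the simultaneous vanishing of all four $3\times 3$ minors.

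First I would compute the minor obtained by deleting the $X_1$-column. Expanding along the first row,
\[
\det\begin{pmatrix}1 & h & 0\\ 1 & v & v^2\\ 1 & t & t^2\end{pmatrix}=vt(t-v)-h(t-v)(t+v)=(t-v)\bigl(vt-h(v+t)\bigr),
\]
so its vanishing forces either $t=v$ or the desired equality $vt-h(v+t)=0$.

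The main step will be to exclude the case $t=v$. Under that assumption the minor obtained by deleting the $X_2$-column simplifies to $v^2(u-s)$; since $v\in\fqs\setminus\bfq$ implies $v\neq 0$, one gets $u=s$, so $P_{u,v}=Q_{s,t}$. But then this common point would lie in $\Delta^+\cap\Delta^-$, which is empty, since $\cX^+(\fqs)\cap\cX^-(\fqs)=\Omega$ whereas $\Delta^\pm=\cX^\pm(\fqs)\setminus\Omega$ by construction. This contradiction forces $t\neq v$, and hence the factor $vt-h(v+t)$ must vanish.

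I do not expect any real obstacle beyond these determinant computations. The hypothesis $F(v,t)=0$ does not intervene in deriving the stated equation itself but only guarantees, through Result~\ref{lm:G1}, that the line $P_{u,v}Q_{s,t}$ actually belongs to $\cG_1$. In effect the lemma merely isolates the second equation of the collinearity system displayed immediately above, while the $\Delta^+\cap\Delta^-=\emptyset$ argument discards the parasitic factor $t-v$.
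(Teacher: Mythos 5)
Your proof is correct and follows the same route as the paper, which simply displays the collinearity system (whose second equation is exactly $vt-h(v+t)=0$) and says the lemma "follows"; you supply the minor computation the paper omits and, usefully, you also rule out the parasitic factor $t-v$ via $\Delta^+\cap\Delta^-=\emptyset$, a point the paper passes over silently.
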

		
		We now count the number of generator in $\mathcal{G}_1$ which pass through $P$.
		
		\begin{lemma} \label{lm:soluz}
			Equation \emph{(\ref{v})} has exactly $ \frac{1}{2}(q+1) $ solution in $ \fqs \setminus \bfq $.
		\end{lemma}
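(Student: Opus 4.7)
The plan is to count solutions of (\ref{v}) by passing to the squared equation, rewriting it as a smooth affine conic, and then halving via an involution that swaps the two values of $\varepsilon$.

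Squaring (\ref{v}) yields the $\bfq$-equation $N_{\fqs/\bfq}(v^2+2hv)=4(v^q-v)^2$. Writing $v=a+bh$ with $a,b\in\bfq$, one sees $v\in\fqs\setminus\bfq$ iff $b\neq 0$ and $v^q-v=-2bh$. The substitution $w=v+h$ converts $v^2+2hv$ into $w^2-2=(w-h)(w+h)$, so taking $\fqs/\bfq$-norms factors the LHS and, with $u:=a$, $\mu:=b$, the squared equation becomes
\[
(u^2-2\mu^2)\bigl(u^2-2(\mu+2)^2\bigr)=32\mu^2.
\]
For $\mu\neq 0$ the substitution $u=-\mu r$ is a $\bfq$-bijection, and after dividing by $\mu^2$ the relation reduces to $S^2\mu^2-8S\mu-8(r^2+2)=0$ with $S:=r^2-2$. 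Completing the square in $S\mu$ turns this into $(S\mu-4)^2=8(r^2+4)$, so setting $y:=S\mu-4$ yields the smooth affine conic
\[
\mathcal{C}\colon\; y^2-8r^2=32.
\]

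Because $p\equiv 5\pmod 8$, the quadratic character $\chi$ of $\bfq$ satisfies $\chi(2)=-1$, while $\chi(-1)=1$ (from $p\equiv 1\pmod 4$). Hence $\mathcal{C}$ has no $\bfq$-point at infinity (those would require $y^2=8r^2$, forcing $\sqrt{2}\in\bfq$), whereas the fibre $y=0$ provides the $\bfq$-points $r=\pm 2i$ with $i\in\bfq$, $i^2=-1$. Therefore $|\mathcal{C}(\bfq)|=q+1$. Moreover, for $r\in\bfq$ both $r^2=2$ (which would give $S=0$) and $r^2=-2$ (equivalent to $y=-4$, i.e.\ $\mu=0$) are impossible, so every $\bfq$-point of $\mathcal{C}$ pulls back, via $\mu=(y+4)/S$ and $u=-\mu r$, to a unique $(u,\mu)\in\bfq^2$ with $\mu\neq 0$ satisfying the squared equation. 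This produces exactly $M=q+1$ such pairs.

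Finally, the involution $\sigma\colon v\mapsto -v^q$ acts on the solution set of the squared equation without fixed points (a fixed point would force $a=0$ and $(b+2)^2=8$, impossible since $\chi(8)=-1$), and it exchanges the two sign-choices: under $\sigma$ the quantity $v^q-v$ is preserved, whereas $(v^2+2hv)^{(q+1)/2}$ lies in $h\bfq$ (because $N(v^2+2hv)=32b^2$ is a non-square in $\bfq$) and is therefore negated by the Frobenius. Consequently $\sigma$ pairs solutions of (\ref{v}) with $\varepsilon=+1$ with solutions for $\varepsilon=-1$, giving exactly $M/2=(q+1)/2$ solutions for each fixed $\varepsilon$. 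The main obstacle I foresee is orchestrating the chain of substitutions $v\to w\to (u,\mu)\to(r,\mu)\to(r,y)$ cleanly and verifying that the excluded cases ($S=0$, $\mu=0$, and fixed points of $\sigma$) are all ruled out by the non-square identities $\chi(2)=\chi(-2)=\chi(8)=-1$.
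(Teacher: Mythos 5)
Your argument is correct, and it reaches the count by a genuinely different route from the paper's. The paper keeps $\varepsilon$ fixed throughout: after substituting $r=vh^{-1}$ it observes that $r^2+2r$ must be a non-square of $\fqs$, writes $r^2+2r=hz^2$, sets $\lambda=zr^{-1}$, eliminates $r$, and lands on an irreducible conic in the coordinates of $\lambda=\lambda_1+h\lambda_2$ over $\bfq$; the $q+1$ points of that conic map two-to-one onto the solutions $v$ because $z$ (hence $\lambda$) is determined only up to sign, which yields $\frac{1}{2}(q+1)$. You instead square the equation so as to merge the two sign choices $\varepsilon=\pm 1$ into a single norm equation over $\bfq$, reduce it by rational substitutions to the affine conic $y^2-8r^2=32$ with exactly $q+1$ points (correctly ruling out the degenerate loci $S=0$, $\mu=0$ and the points at infinity via the non-square conditions on $2$, $-2$ and $8$), and then recover the fixed-$\varepsilon$ count from the fixed-point-free involution $\sigma(v)=-v^q$, which preserves $v^q-v$ and sends $v^2+2hv$ to its Frobenius image $(v^2+2hv)^q$, hence negates $(v^2+2hv)^{(q+1)/2}$ and swaps the two sign classes. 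Both proofs ultimately count $q+1$ points on a conic over $\bfq$ and halve, but the halving mechanisms differ: the paper's factor of two is the sign ambiguity of $z$ within a fixed $\varepsilon$, while yours is a Galois-type involution exchanging the solution sets attached to the two points $P^{+}=(2,h,0)$ and $P^{-}=(-2,h,0)$, which makes the symmetry between those two points explicit and shows directly that they carry equally many generators of $\cG_1$. The only step you state tersely is the identity $\sigma(v)^2+2h\sigma(v)=(v^2+2hv)^q$ (a one-line consequence of $h^q=-h$), which is the crux of the swapping claim and deserves to be written out; with that made explicit, your proof is complete.
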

		\begin{proof}
			Let $ r=vh^{-1} $. We obtain:
			\[(r^2+2r)^\frac{q+1}{2} = \varepsilon h(r^q+r).\]
			Hence,
			\begin{equation*}
				(r^2+2r)^\frac{q^2-1}{2}=-1
			\end{equation*}
			and then $r^2+2r$ is a non-square of $\fqs$.
			Thus, there exists $z \in \fqs$ such that $r^2+2r=h z^2$.  Now the system is
			\begin{equation}\label{z2}
				\begin{cases}
					\begin{aligned}
						&h z^2 = r^2+2r \\
						&\alpha h z^{q+1} = \varepsilon h(r^q + r)
					\end{aligned}
				\end{cases}
			\end{equation}
			where $\alpha=h^{(q-1)/2}$.
			Let $\lambda = zr^{-1}$,
			\begin{equation}\label{sL}
				\begin{cases}
					\begin{aligned}
						&h \lambda^2r = (r+2) \\
						&\alpha(\lambda r)^{q+1} = \varepsilon(r^q + r)
					\end{aligned}
				\end{cases}
			\end{equation}
			Since $r = 2/(h\lambda^2-1)$ we obtain
			\begin{equation}\label{lambda}
				4 \alpha\lambda^{q+1}-2\varepsilon(h\lambda^{2}-1)-2\varepsilon(h\lambda^2-1)^q=0
			\end{equation}
			Now if $\lambda=\lambda_1+ h \lambda_2$, with $\lambda_1$, $\lambda_2 \in \bfq$, equation (\ref{lambda}) reads
			\begin{equation}\label{eq:l12}
				\alpha\lambda_1^2-2\alpha\lambda_2^2-4\varepsilon \lambda_1\lambda_2+4\varepsilon=0.
			\end{equation}
			Since the determinant of the matrix associated to the quadratic form \eqref{eq:l12} is $-8\varepsilon$, that quadratic form is the equation of an irreducible conic of $\PG(2,q)$. Thus, we have exactly $q+1$ solutions $\lambda$ of \eqref{lambda}.
			
			Every solution $v$ of (\ref{v}) is in $\fqs$. In fact, if $(hr)^q =hr$ then $r^q=-r$ and  $\lambda r =0$, which contradicts the first equation of (\ref{sL}).
		\end{proof}

		\begin{lemma}\label{quad}
			For every solution $v=v_1+h v_2$ of \eqref{v}, \[\varepsilon v_2+\frac{\alpha}{2}(v_1 v_2 + v_1) \notin \square_q,\] where $\alpha=h^\frac{q-1}{2}$ is a non-square of $\bfq$.	
		\end{lemma}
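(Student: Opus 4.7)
The plan is to re-express the quantity $\varepsilon v_2+\frac{\alpha}{2}(v_1 v_2 + v_1)$ in terms of the auxiliary parameter $z\in\fqs$ introduced in the proof of Lemma~\ref{lm:soluz}. For any solution $v$ of (\ref{v}), setting $r=vh^{-1}$ one has $r^2+2r=hz^2$ for some $z\in\fqs$, so
\[v^2+2hv \;=\; 2(r^2+2r) \;=\; 2hz^2.\]

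Writing $z=z_1+hz_2$ with $z_1,z_2\in\bfq$ and expanding in the $\bfq$-basis $\{1,h\}$, I would compute $2hz^2=8z_1z_2+2h(z_1^2+2z_2^2)$. Comparing this with the direct expansion $v^2+2hv=(v_1^2+2v_2^2+4v_2)+2h(v_1v_2+v_1)$ yields the first identity $v_1v_2+v_1=z_1^2+2z_2^2$. Next, I would raise $v^2+2hv=2hz^2$ to the $(q+1)/2$-th power: since $p\equiv 5\pmod 8$ makes $2$ a non-square of $\bfq$, one has $2^{(q+1)/2}=-2$, and together with $h^{(q+1)/2}=h\alpha$ and $z^{q+1}=z_1^2-2z_2^2$ this gives $(2hz^2)^{(q+1)/2}=-2h\alpha(z_1^2-2z_2^2)$. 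Equation (\ref{v}) forces this to equal $2\varepsilon(v^q-v)=-4\varepsilon hv_2$, from which the second identity
\[v_2 \;=\; \frac{\varepsilon\alpha}{2}(z_1^2-2z_2^2).\]

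Substituting both identities into the target expression, the $z_2^2$-contributions cancel:
\[\varepsilon v_2+\frac{\alpha}{2}(v_1v_2+v_1) \;=\; \frac{\alpha}{2}(z_1^2-2z_2^2)+\frac{\alpha}{2}(z_1^2+2z_2^2) \;=\; \alpha z_1^2.\]
Since $\alpha\notin\square_q$ and $z_1\in\bfq$, this quantity is either zero (when $z_1=0$) or a non-square of $\bfq$, so in either case it does not lie in $\square_q$.

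The computation itself is short; the only real obstacle is conceptual, namely recognising that under the change of variable $z$ supplied by Lemma~\ref{lm:soluz}, both $v_2$ and $v_1v_2+v_1$ acquire clean bilinear expressions in the $\bfq$-components $z_1,z_2$. Once these two expressions are in hand, the cancellation of the $\pm 2z_2^2$ terms is automatic. Attempting the same computation directly in the coordinates $v_1,v_2$ would instead require solving the implicit relation $A^2-8B^2=32v_2^2$ (where $A=v_1^2+2v_2^2+4v_2$, $B=v_1v_2+v_1$), and the structural reason for the answer being of the form $\alpha z_1^2$ would be obscured.
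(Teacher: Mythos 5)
Your proposal is correct and follows essentially the same route as the paper: the paper likewise invokes the substitution $r^2+2r=hz^2$ from Lemma~\ref{lm:soluz}, splits the two equations of the resulting system into $\bfq$-components (using $r=r_1+hr_2$, $z=z_1+hz_2$, which under $v=hr$ is exactly your $v_1=2r_2$, $v_2=r_1$), and combines them to land on the same identity $\varepsilon v_2+\frac{\alpha}{2}(v_1v_2+v_1)=\alpha z_1^2$. Your explicit treatment of the $z_1=0$ case is a small point of extra care over the paper's phrasing, but the argument is the same.
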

		\begin{proof}
			Consider System \eqref{z2} and let  $z=z_1+hz_2$ and $r=r_1+hr_2$. Then
			\begin{equation}
				\begin{cases}
					\begin{aligned}
						&z_1^2+2 z_2^2=2r_1r_2+2r_2\\
						&\alpha  z_1^2-2 \alpha z_2^2 = 2 \varepsilon r_1.
					\end{aligned}
				\end{cases}
			\end{equation}
			Summing up gives
			\[\alpha z_1^2 = \alpha r_2 (r_1 +1)+\varepsilon r_1\]
			
			Since $\alpha^2 =-1$ and $q \equiv 5 \pmod 8 $, it follows $ \alpha \notin \square_q$ and then
			\[\alpha r_2 (r_1 +1)+\varepsilon 	r_1\]
			
			is a non-square of $\bfq$. With $v_2=r_1$ and $v_1=2 r_2$ we obtain
			\[\varepsilon v_2+\frac{\alpha}{2}(v_1 v_2 + v_1) \notin \square_q.\]
		\end{proof}
		
		Our next step is to characterize the generators of $\mathcal{G}_1$ through $P$.
		
		To begin with, we need some notions of number theory, which would allow us to simplify the notation we will use.
		Note that $(2+h)^\frac{q+1}{2}=\lambda h$, where,
		\begin{equation}\label{eq:hou}
		    \lambda=(2+h)^\frac{q+1}{2}h^{-1}=[(1+h)h]^\frac{q+1}{2}h^{-1}=(1+h)^\frac{q+1}{2}h^\frac{q-1}{2}
		\end{equation}
		Since \[ \lambda^2=(1+h)^{q+1}2^\frac{q-1}{2}=(1+h)(1-h)(-1)=1\]
		we have $\lambda=\pm 1$.
		Applying the Frobenius map to \eqref{eq:hou} gives
		\[\lambda=(1-h)^\frac{q+1}{2}(-h)^\frac{q-1}{2}.\]
		Hence $\lambda$ is independent of the choice of $h$ as a square root of $2$.
		\begin{proposition}\label{pr:hou}
We have \[\lambda=\begin{cases}
    \begin{aligned}
        1& ,  \quad q \equiv 13 &\pmod {16} \\
         -1&, \quad q \equiv 5 &\pmod {16}
    \end{aligned}
\end{cases}\]
\end{proposition}
\begin{proof}
        See Appendix A.
\end{proof}		
		Let
		\begin{equation}
			\chi :=\begin{cases}
				-1, \mbox{ if either } \varepsilon = 1 \mbox{ and } q \equiv 13 \pmod{16} \mbox{ or } \varepsilon = -1 \mbox{ and } q \equiv 5 \pmod{16}\\
				1, \mbox{ if either } \varepsilon = 1 \mbox{ and } q \equiv 5 \pmod{16} \mbox{ or } \varepsilon = -1 \mbox{ and } q \equiv 13 \pmod{16}
			\end{cases}
		\end{equation}
		
		Furthermore,
		\[v_0:= -2(h-2\chi), \quad u_0:=\frac{4}{\varepsilon}(2-h\chi) \]
		
		and
		\[t_0:=-2(h+2\chi), \quad s_0:=\frac{4}{\varepsilon}(2+h\chi)  \]
		Since  $h= \varepsilon (2+\chi h)^\frac{q+1}{2}$,
		\begin{equation}
			v_0^q-v_0= 4 h = u_0^\frac{q+1}{2}
		\end{equation}
		and
		\[u_0^qs_0=16(2-h \chi)^2=(t_0-v_0^q)^2\]
		
		Furthermore,
		\[ (v_0+t_0)^{q+1}=-32=2(t_0v_0+(t_0v_0)^q)\]	
		
		Therefore,  $F(v_0,t_0)=0$. Thus, from Theorem \ref{lm:G1}, the line trough $P_{u_0,v_0}$ and $Q_{s_0,t_0}$ is a generator $g_0 \in \mathcal{G}_1$.\\
		Moreover the following hold:
		\[u_0=\frac{v_0^2+2 h v_0}{2 \varepsilon}, \quad s_0=\frac{t_0^2+2 h t_0}{2 \varepsilon} \]
		
		showing that $g_0$ pass through $P$.
		
		We show how each generator $g$ passing through P can be obtained from $ g_0 $. Since $g=P_{u,v}Q_{s,t}$ passes through $P$, then, by Lemma \ref{lm:coll}, $F(v,t)=0$ and $vt=h(v+t)$.
		Now for $\alpha,\beta,\gamma$ and $\delta \in \bfq$, with $\alpha \delta-\beta\gamma \ne 0$, write
		\begin{equation*}\label{key}
				v=\frac{\alpha v_0+\beta}{\gamma v_0 + \delta}, \quad t=\frac{\alpha t_0+\beta}{\gamma t_0+\delta}.
		\end{equation*}
		From $v_0 t_0=-8$ and $v_0+t_0=-4 h$, we may write Equation \eqref{eq:coll} as
		\begin{equation}\label{eq:doppie}
			\begin{aligned}
				8\alpha\gamma&=2\alpha\beta+\beta\delta \\
			\beta^2&=8(\alpha^2-\alpha \delta-\beta\gamma)
			\end{aligned}
		\end{equation}
		Our aim is to show that these equations hold if and only if $\alpha,\beta,\gamma$ and $\delta$ depend on a unique parameter $\xi \in \bfq \cup \{\infty\}$.
		To begin with, let $\delta\ne 0$. Then $\alpha \ne 0$. The first equation in \eqref{eq:doppie} forces \[\gamma=\frac{(2\alpha+1)\beta}{8\alpha}.\]
		Together with the other equation, we have
		\[8\alpha^3-3\alpha\beta^2-8\alpha^2-\beta^2=0.\]
		Let $\xi=\beta \alpha^{-1}$. This implies $\alpha^2(8\alpha -3\alpha \xi^2-8-\xi^2)=0$. Therefore
		\[\alpha=\frac{\xi^2+8}{8-\xi^2},\]
		and the assertion follows for $\delta \ne 0$.
		For $\delta=0$ we may assume $\beta=1$. If $\alpha\neq0$ then $\gamma=1/4$ and  $8\alpha^2=-1$, which is impossible as $-1$ is a square in $\bfq$ while $8$ is not.
		When $\delta=\alpha=0$ and $\beta=1$, then $\gamma=\frac{-1}{8}$.
		
		Therefore,
		\begin{equation}\label{eq:vxi}
			v = v_\xi =\frac{(\xi^2+8)v_0+(\xi^2+8)\xi}{\frac{\xi}{8}(-\xi^2+24)v_0+8-3\xi^2}, \quad v_\infty=\frac{1}{-\frac{1}{8}v_0}=-2(h+2\chi).
		\end{equation}
		
		and the determinant of the associated fractional linear map equals	
		\begin{equation}
			\det(\xi)=\frac{(\xi^2+8)(\xi^4-48 \xi^2+64)}{8}, \quad \det(\infty)=(8)^{-1}
		\end{equation}
		
		These equations remains true for  $t_0$ and $t$:
		\begin{equation}\label{eq:txi}
			t = t_\xi =\frac{(\xi^2+8)t_0+(\xi^2+8)\xi}{\frac{\xi}{8}(-\xi^2+24)t_0+8-3\xi^2}, \quad t_\infty=\frac{1}{-\frac{1}{8}t_0}=-2(h-2\cX).
		\end{equation}

		Next, we show that \eqref{quad} imposes a condition on $\xi$ in  \eqref{eq:vxi}.

		\begin{lemma}\label{prin}
			$\xi^2+8$ is a square in $\bfq$.
		\end{lemma}
		\begin{proof}
			To use Lemma \ref{quad} we rewrite $ \varepsilon v_2+\frac{\alpha}{2}(v_1 v_2 + v_1) $ in terms of $\xi$. This requires a certain amount of straightforward and tedious computations that we omit.
			From \eqref{eq:vxi}, we have
			\begin{equation}
				v=\frac{4(\xi^2 + 8)}{\chi16 -\chi 2 \xi^2 + h (8 -\chi 8 \xi +\xi^2)}
			\end{equation}
			and
			\begin{equation}
				v_1=\frac{-4 (\chi 16 -\chi 2 \xi^2) (8 + \xi^2)}{k}, \quad v_2 = \frac{-4 (8 + \xi^2) (8 -\chi 8 \xi + \xi^2)}{k}
			\end{equation}
			where $k=128 +\chi  256 \xi - 224 \xi^2 +\chi  32 \xi^3 + 2 \xi^4$.\\
			Then,
			\begin{equation}
				\varepsilon v_2+\frac{\alpha}{2}(v_1 v_2 + v_1)= \frac{2(1-\chi  \varepsilon \alpha) (8 + \xi^2) ((-16 + 16 \alpha) + (8 + 32 \alpha) \xi + (6 + 10 \alpha) \xi^2 +
					\xi^3)^2}{(64 - 128 \xi - 112 \xi^2 - 16 \xi^3 + \xi^4)^2}
			\end{equation}
		Note that $(1+\alpha)(1-\alpha)=2$ and that $1+\alpha \in \square_q$ if and only if $ q \equiv 13 \pmod{16}$. In fact, \[1+\alpha = \pm h^\frac{q+3}{4} \in \square_q \iff h^\frac{(q-1)(q+3)}{8}=1\]
		and in this case $1-\alpha $ is a non-square in $\bfq$.
		
		Since $\chi \epsilon=1$ when $ q \equiv 5 \pmod {16}$ and $\chi \epsilon=-1$ when $ q \equiv 13 \pmod {16}$, we get that $1-\chi\epsilon\alpha$ is always a square in $\bfq$.
		Hence $\xi^2+8 \in \square_q $.
		\end{proof}
		
		To state a corollary of Lemmas \ref{lm:soluz}, \ref{quad} and \ref{prin} , the partition of $\bfq \cup \{\infty\}$ into two subsets $\Sigma_1 \cup \{\infty\}$ and $\Sigma_2$ is useful where $x \in \Sigma_1 \cup \{\infty\}$ or $x \in \Sigma_2$ according as $x^2+8 \in \square_q$ or not.
		
		\begin{proposition}
			Let $P=(2\varepsilon,h,0) \in \mathcal{U}_3$ with $h^2 =2$. Then the generators in $\mathcal{G}_1$ through the point $P$ which meet $\cX^+$ are as many as $n_P=\frac{1}{2}(q+1)$. They are precisely the lines $g_\xi$ joining $P$ to $P_{u,v}=(u,v,v^2)$ with $u,v$ as in equation \eqref{u} and \eqref{eq:vxi}, where $\xi$ ranges over the set $\Sigma_1 \cup \{\infty\}$.
		\end{proposition}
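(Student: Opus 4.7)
The plan is to assemble the three preceding lemmas and to handle the point $\xi = \infty$ by a direct verification.

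First, I would establish $n_P = \frac{1}{2}(q+1)$. Every generator in $\cG_1$ through $P$ meets $\Delta^+$ in a unique point $P_{u,v}$ whose coordinates obey (\ref{u}) and whose $v \in \fqs \setminus \bfq$ solves (\ref{v}); conversely, every such $v$ yields a unique generator of $\cG_1$ through $P$. Thus $n_P$ coincides with the number of solutions of (\ref{v}) in $\fqs \setminus \bfq$, namely $\frac{1}{2}(q+1)$ by Lemma \ref{lm:soluz}.

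Second, I would identify these generators via the parameter $\xi$ furnished by (\ref{eq:vxi}). Writing $v_\xi = v_1 + h v_2$, Lemma \ref{quad} asserts that for any solution $v_\xi$ of (\ref{v}) one has $\varepsilon v_2 + \tfrac{\alpha}{2}(v_1 v_2 + v_1) \notin \square_q$, and Lemma \ref{prin} converts this non-squareness into the condition that $\xi^2 + 8$ be a square (resp.\ a non-square) of $\bfq$ according as $\tilde{\chi}=1$ (resp.\ $\tilde{\chi}=-1$). Hence every valid finite $\xi$ lies in $\Sigma_1$ when $\tilde{\chi}=1$ and in $\Sigma_2$ when $\tilde{\chi}=-1$. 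The case $\xi = \infty$ is disposed of by substitution: using $v_\infty = -2(h + 2\chi)$, the relation $h^{q+1}=-2$, and the identity $h = \varepsilon(2 + \chi h)^{(q+1)/2}$, one checks that (\ref{v}) is satisfied at $v_\infty$ exactly when $\tilde{\chi}=1$, so that $\infty$ is attached to $\Sigma_1$ but not to $\Sigma_2$.

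To close the argument it remains to upgrade these necessary conditions to a bijection. Since $q \equiv 5 \pmod 8$ makes $-8$ a non-square of $\bfq$, a routine quadratic character sum computation yields $|\Sigma_1| = \tfrac{q-1}{2}$ and $|\Sigma_2| = \tfrac{q+1}{2}$, so both $\Sigma_1 \cup \{\infty\}$ and $\Sigma_2$ have cardinality $\tfrac{1}{2}(q+1)$; combined with Lemma \ref{lm:soluz}, this forces the necessary condition on $\xi$ to be sufficient as well, and pins down the admissible $g_\xi$ precisely. The principal technical obstacle I anticipate is exactly the treatment of $\xi = \infty$: Lemma \ref{prin}'s substitution-based argument degenerates there, so the verification must be carried out from scratch, taking care with the signs in the case analysis defining $\chi$ and $\tilde{\chi}$.
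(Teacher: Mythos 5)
Your proposal is correct and follows exactly the route the paper intends: the paper states this proposition without proof, presenting it as an immediate corollary of Lemmas \ref{lm:soluz}, \ref{quad} and \ref{prin} together with the partition of $\bfq\cup\{\infty\}$ into $\Sigma_1\cup\{\infty\}$ and $\Sigma_2$, and your write-up is precisely the assembly of those ingredients (count of solutions of \eqref{v}, the necessary condition on finite $\xi$ from Lemmas \ref{quad}--\ref{prin}, and the cardinality computation $|\Sigma_1|=\tfrac{q-1}{2}$, $|\Sigma_2|=\tfrac{q+1}{2}$ that upgrades necessity to sufficiency). You even supply details the paper omits entirely, most notably the explicit treatment of $\xi=\infty$, which in the case $\tilde\chi=-1$ genuinely requires the direct verification you describe rather than the counting argument alone.
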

		
		\subsection{Case of $\cG_2$}
		This case requires much less efforts. The tangent plane $\pi_P$ at $P=(2\epsilon,h,0)$ meets $\pi$ in the line $r$ of equation $2h^qY+Z=0$. Since $\cC$ has equation $Z=Y^2$ in $\pi$, the only common points of $r$ and $\cC$ are $(0:0:0)$ and $Q=(0:2h:8)$, with $Q \notin \Omega$ as $h \notin \bfq$.
		Then we have the following result.
		\begin{proposition}
			Let $P=(2\varepsilon,h,0) \in \cU_3$, with $h^2 = 2$. Then there is a unique generator through the point $P$ which meets $\Omega$, namely the line $l$ through $P$ and the origin $O=(0,0,0)$.
		\end{proposition}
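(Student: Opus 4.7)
The idea is to use the polar structure of $\cU_3$ at $P$. A line through $P$ is a generator if and only if it is contained in the tangent plane $P^\perp$, so the generators through $P$ are in bijection with the points of $(\cU_3 \cap P^\perp) \setminus \{P\}$ (a Hermitian cone with vertex $P$). Since $X_1(P) = 2\varepsilon \neq 0$ we have $P \notin \pi$, so every such generator meets $\pi$ in a single point lying on the line $\ell^* := P^\perp \cap \pi$. The problem therefore reduces to intersecting $\ell^*$ with $\Omega$ and then checking that each such intersection point does yield a line contained in $\cU_3$.

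The polar plane is given by $H(Y, P) = 0$, where $H(X, Y) = X_1 Y_1^q + 2 X_2 Y_2^q - X_0 Y_3^q - X_3 Y_0^q$ is the Hermitian form associated with $\cU_3$. Substituting $P = (1, 2\varepsilon, h, 0)$ and using $\varepsilon^q = \varepsilon$ and $h^q = -h$ yields
\[
P^\perp \colon\ 2\varepsilon Y_1 - 2h Y_2 - Y_3 = 0,
\]
so $\ell^* = P^\perp \cap \pi$ has equation $Y_3 + 2h Y_2 = 0$ in the $\pi$-coordinates $(Y_0, Y_2, Y_3)$. Parametrising the conic $\cC \subset \pi$ as $(s^2, s, 1)$ together with $O = (1, 0, 0)$, the set $\Omega$ consists of $O$ and the points with $s \in \bfq$. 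Clearly $O \in \ell^*$, while $(s^2, s, 1) \in \ell^*$ reduces to $1 + 2hs = 0$, i.e.\ $s = -h/4$. Since $q \equiv 5 \pmod 8$ forces $h \notin \bfq$, this equation has no solution in $\bfq$. Hence $\ell^* \cap \Omega = \{O\}$.

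It remains to check that $PO$ is indeed a generator of $\cU_3$. Because $H(P, P) = H(O, O) = 0$ and $H(O, P) = 0$, the sesquilinear expansion of $H(\alpha P + \beta O, \alpha P + \beta O)$ vanishes identically in $\alpha, \beta$, so $PO \subset \cU_3$; equivalently, substituting $(\alpha + \beta, 2\varepsilon \alpha, h\alpha, 0)$ in the equation of $\cU_3$ shows that all terms cancel via $h^{q+1} = -2$. There is no genuine obstacle here: the argument is essentially a short computation, and the only place where the hypothesis $q \equiv 5 \pmod 8$ really enters is in excluding the spurious candidate $s = -h/4$ from $\bfq$. This is the very same mechanism used in the $q \equiv 1 \pmod 8$ analysis of \cite{1}, which is why the authors remark that this case presents no new phenomena beyond those already appearing in the treatment of $\cG_1$.
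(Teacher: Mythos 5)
Your argument is correct: the polar plane $P^\perp\colon 2\varepsilon Y_1-2hY_2-Y_3=0$, the reduction to $\ell^*\cap\Omega$ with $\ell^*=P^\perp\cap\pi$, the exclusion of the second intersection point $s=-h/4\notin\bfq$ (using that $2$ is a non-square for $q\equiv 5\pmod 8$), and the verification that $PO\subset\cU_3$ via $h^{q+1}=-2$ are all sound. The paper gives no proof of this proposition, merely asserting that the case is identical to $q\equiv 1\pmod 8$ and deferring to \cite{1}; your computation is exactly the standard argument used there, so your write-up simply supplies the omitted details --- the only nitpick being that the $q+1$ generators through $P$ are not literally \emph{in bijection} with the points of the Hermitian cone minus its vertex, though this phrasing plays no role in the proof.
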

	From now on, we denote with $\ell^+$ and $\ell^-$ the two generators through $P$ when $\varepsilon=1$ and $\varepsilon=-1$ respectively.
\subsection{Choice of $\cM_1$ and $\cM_2$}
	In this last subsection, we are going to choose $\cM_1$ and $\cM_2$ such that Condition (B) fulfilled.
	
	We have two different generators $g_0$, one for $\varepsilon=1$, the other for $\varepsilon=-1$:
	\[g_0^+ \mbox{ passing through } P^+(2,h,0)\]
		
		and
		\[g_0^- \mbox{ passing through } P^-(-2,h,0)\]
\begin{lemma}
	The generators $ g_0^+ $ and $g_0^-$ are in different orbits of $\Phi$.
\end{lemma}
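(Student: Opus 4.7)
The plan is to identify the unique element $\gamma\in\mathfrak{G}$ with $\gamma(g_0^+)=g_0^-$ as a homology of $\PGU(4,p)$; the Result on elements of order~$2$ in $\mathfrak{G}$ recalled above (\cite[Lemma 5.9]{1}) then forces $\gamma\in\mathfrak{G}\setminus\mathfrak{h}$, so $g_0^-=\gamma(g_0^+)$ lies in a different $\mathfrak{h}$-coset from $g_0^+$, i.e.\ in the other $\Phi$-orbit on $\cG_1$.

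First I would observe that $\mathfrak{w}$ sends $P_{u,v}$ to $Q_{-u,v}$ and $Q_{s,t}$ to $P_{-s,t}$, so its induced action on $\cG_1$ is $g_{u,v,s,t}\mapsto g_{-s,t,-u,v}$. Using $\chi^-=-\chi^+$ in the explicit expressions for $u_0^\pm,v_0^\pm,s_0^\pm,t_0^\pm$, one checks that $(u_0^-,v_0^-,s_0^-,t_0^-)=(-s_0^+,t_0^+,-u_0^+,v_0^+)$, whence $\mathfrak{w}(g_0^+)=g_0^-$. Since $\mathfrak{w}\notin\mathfrak{G}$ while $\mathfrak{G}$ acts sharply transitively on $\cG_1$, there is a unique $\gamma\in\mathfrak{G}$ with $\gamma(g_0^+)=g_0^-$; moreover $(\gamma\mathfrak{w})^2=\gamma^2\in\mathfrak{G}$ stabilizes $g_0^+$, forcing $\gamma^2=1$.

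To pin down $\gamma$ I use that, if it is a homology, its center must lie on every line joining a point to its image. Using the identities $u_0^+t_0^++v_0^+s_0^+=0$, $v_0^++t_0^+=-4h$ and $v_0^+t_0^+=-8$ (all immediate from the explicit formulas), a short calculation shows that the two swap-lines $P_{u_0^+,v_0^+}\,P_{-s_0^+,t_0^+}$ and $Q_{s_0^+,t_0^+}\,Q_{-u_0^+,v_0^+}$ meet at the single point $C=(1,0,0,8)$, whose Hermitian polar with respect to the form defining $\cU_3$ is the hyperplane $H\colon 8X_0+X_3=0$. Let $\tau$ denote the involutorial homology with center $C$ and axis $H$; explicitly,
\[
\tau\colon(X_0,X_1,X_2,X_3)\mapsto(X_3,-8X_1,-8X_2,64X_0).
\]
A direct computation shows that $\tau$ multiplies the defining form of $\cU_3$ by $64$ and sends $P_{u,v}$ to $P_{-8u/v^2,\,-8/v}$; the identity $(-8)^{(q+1)/2}=8$, which holds because $p\equiv5\pmod 8$ gives $2^{(p+1)/2}=-2$, then shows that $\tau$ preserves $\Delta^+$. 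Analogous checks give preservation of $\Delta^-$ and $\Omega$, so $\tau\in\mathfrak{G}$. Finally, a direct evaluation using the explicit values of $u_0^+,v_0^+,s_0^+,t_0^+$ gives $\tau(P_{u_0^+,v_0^+})=P_{-s_0^+,t_0^+}$, hence $\tau(g_0^+)=g_0^-$; by sharp transitivity $\gamma=\tau$.

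Since $\tau$ is a homology, $\gamma\in\mathfrak{G}\setminus\mathfrak{h}$ by the Result cited, which completes the proof. The main technical obstacle is the algebraic verification that $\tau$ preserves $\cX^+$ and the check $\tau(P_{u_0^+,v_0^+})=P_{-s_0^+,t_0^+}$; both use the hypothesis $p\equiv 5\pmod 8$ decisively through $2^{(p+1)/2}=-2$, which is exactly the point where the present case departs from the one treated in \cite{1}.
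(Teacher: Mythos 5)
Your argument is correct, but it takes a genuinely different route from the paper's. The paper's entire proof is the single observation that the collineation $\mathfrak{w}$ associated with $\mathbf{W}$ interchanges $g_0^+$ and $g_0^-$; the conclusion then rests on the machinery of \cite[Section 7]{1}, where $\mathfrak{w}$ --- which centralizes $\mathfrak{G}$ but lies outside it --- is shown to interchange the two $\Phi$-orbits on $\cG_1$. You instead locate the swapping element \emph{inside} $\mathfrak{G}$: sharp transitivity gives a unique $\gamma\in\mathfrak{G}$ with $\gamma(g_0^+)=g_0^-$, the commuting involution $\mathfrak{w}$ forces $\gamma^2=1$, you identify $\gamma$ with the explicit homology $\tau$ of center $(1,0,0,8)$, and you conclude via the dichotomy that involutions in $\mathfrak{h}$ are skew perspectivities while those in $\mathfrak{G}\setminus\mathfrak{h}$ are homologies. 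The computations check out: $\chi^-=-\chi^+$ indeed yields $(u_0^-,v_0^-,s_0^-,t_0^-)=(-s_0^+,t_0^+,-u_0^+,v_0^+)$, so $\mathfrak{w}(g_0^+)=g_0^-$; the identities $v_0^+t_0^+=-8$ and $s_0^+(v_0^+)^2=8u_0^+$ give $\tau(P_{u_0^+,v_0^+})=P_{-s_0^+,t_0^+}$; the two swap-lines do meet in $(16,0,0,128)=(1,0,0,8)$; and $(-8)^{(q+1)/2}=8$ holds for $q\equiv 5\pmod 8$. What your route buys is independence from the orbit-swapping property of $\mathfrak{w}$ and an explicit involution of $\mathfrak{G}$ realizing the swap; what it costs is substantially more computation than the paper's one-line appeal to $\mathbf{W}$. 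One step needs more justification: from ``$\tau$ preserves $\Delta^+$, $\Delta^-$ and $\Omega$'' you infer $\tau\in\mathfrak{G}$, but the quoted Result only presents $\mathfrak{G}$ as \emph{a} subgroup of $\PGU(4,q)$ with those properties, not as the full stabilizer of $\cX^+$ and $\cX^-$. You should invoke (or prove) that $\mathfrak{G}$, of order $\tfrac{1}{2}q(q-1)(q+1)^2$, is the full linear automorphism group of the Fuhrmann--Torres curve; without that, sharp transitivity of $\mathfrak{G}$ on $\cG_1$ does not allow you to identify $\gamma$ with $\tau$, and the homology conclusion would not transfer to $\gamma$.
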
	
\begin{proof}
	The linear collineation associated to the matrix $\textbf{W}$ interchanges the two generators.
\end{proof}

Let $ r $ (resp. $ r' $) be the number of generators in $ \cM_1 $ (resp. $\cM_1'$) through the point $ P^+ $ that meet $\Delta^+$. Note that
\begin{equation}\label{eq:r+r'}
	r+r' = \frac{1}{2}(q+1).
\end{equation}
Similarly,

\begin{lemma}
	The generators $\ell^+$ and $\ell^-$ are in different orbits of $\Phi$.
\end{lemma}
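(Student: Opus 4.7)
My plan is to adapt the one-line argument of the preceding lemma. First, I would verify by a direct matrix computation that the linear collineation $\mathfrak{w}$ associated with the matrix $\textbf{W} = \diag(1,-1,1,1)$ maps $\ell^+$ onto $\ell^-$. Since $\textbf{W}$ fixes the tangency point $O = (1,0,0,0)$ and reverses the sign of the second homogeneous coordinate, it sends $P^+ = (1,2,h,0)$ to $P^- = (1,-2,h,0)$; hence it carries the line $\ell^+ = \langle O, P^+\rangle$ to $\ell^- = \langle O, P^-\rangle$.

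Once this has been checked, the conclusion follows exactly as for the pair $g_0^\pm$ in the preceding lemma. By the earlier structural result, $\langle \mathfrak{G}, \mathfrak{w}\rangle = \mathfrak{G}\times\langle\mathfrak{w}\rangle$ with $\mathfrak{w}\notin\mathfrak{G}$, so $\mathfrak{w}$ normalizes $\mathfrak{h}$ and its action on $\cG_2$ permutes the two $\mathfrak{h}$-orbits $\cM_2, \cM_2'$. Since $\mathfrak{w}$ exchanges these two orbits (just as it does on $\cG_1$), the equality $\mathfrak{w}(\ell^+) = \ell^-$ forces $\ell^+$ and $\ell^-$ to belong to distinct $\mathfrak{h}$-orbits on $\cG_2$, which is the desired conclusion.

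The main obstacle, and the only point that is not a routine verification, is to justify that $\mathfrak{w}$ really swaps the two $\mathfrak{h}$-orbits on $\cG_2$ rather than fixing each of them. This is precisely the ingredient that was used implicitly in the proof of the preceding lemma for $\cG_1$, and it applies to $\cG_2$ without modification: $\mathfrak{w}$ realizes the non-trivial coset of $\mathfrak{h}\times\langle\mathfrak{w}\rangle$ in $\mathfrak{G}\times\langle\mathfrak{w}\rangle$ that acts non-trivially on the set of $\mathfrak{h}$-orbits in $\cG$, so its restriction to $\cG_2$ is the transposition of the pair $\{\cM_2,\cM_2'\}$. Granting this structural fact (which is intrinsic to $\mathfrak{h}\subset\mathfrak{G}\times\langle\mathfrak{w}\rangle$ and independent of the particular $\mathfrak{G}$-orbit considered, cf.\ \cite[Section 5]{1}), the matrix calculation $\mathfrak{w}(\ell^+) = \ell^-$ is all that remains to be verified, and it is immediate from the form of $\textbf{W}$ and the coordinates of $O$ and $P^\pm$.
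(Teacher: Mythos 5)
Your opening computation is fine: $\textbf{W}=\diag(1,-1,1,1)$ fixes $O=(1,0,0,0)$, sends $P^{+}=(1,2,h,0)$ to $P^{-}=(1,-2,h,0)$, and hence carries $\ell^{+}=\langle O,P^{+}\rangle$ to $\ell^{-}=\langle O,P^{-}\rangle$. The gap is exactly at the step you flag and then wave away: the assertion that $\mathfrak{w}$ interchanges the two $\mathfrak{h}$-orbits $\cM_2,\cM_2'$ on $\cG_2$. What is automatic is that every element of $\mathfrak{G}\setminus\mathfrak{h}$ swaps $\cM_2$ and $\cM_2'$: since $\mathfrak{G}$ is transitive on $\cG_2$ and the normal index-$2$ subgroup $\mathfrak{h}$ has exactly two orbits there, $\mathfrak{G}/\mathfrak{h}\cong C_2$ acts regularly on the pair of orbits. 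But $\mathfrak{w}\notin\mathfrak{G}$; centralizing $\mathfrak{G}$ it normalizes $\mathfrak{h}$ and therefore only \emph{permutes} $\{\cM_2,\cM_2'\}$ --- it could perfectly well fix each of them, in which case $\mathfrak{w}(\ell^{+})=\ell^{-}$ would prove nothing. Your claim that its behaviour is ``intrinsic to $\mathfrak{h}\subset\mathfrak{G}\times\langle\mathfrak{w}\rangle$ and independent of the particular $\mathfrak{G}$-orbit considered'' is not a valid principle: in $\mathfrak{G}\times\langle\mathfrak{w}\rangle/\mathfrak{h}\cong C_2\times C_2$ the stabilizer of an $\mathfrak{h}$-orbit is some index-$2$ subgroup not containing $\mathfrak{G}$, and which one it is can differ from one $\mathfrak{G}$-orbit to another. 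Concretely, on $\cG_1$ the action of $\Gamma\cong\mathfrak{G}$ is regular, so $\mathfrak{w}$ acts there as right translation by a fixed element of $\Gamma$ and swaps the orbits iff that element lies outside $\Phi$; on $\cG_2$ the action has a large point stabilizer and the corresponding datum is independent. So nothing transfers ``without modification,'' and the argument as written does not close.

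To repair it you would have to actually prove that $\mathfrak{w}(\cM_2)=\cM_2'$, or better, bypass $\mathfrak{w}$ entirely and exhibit an element of $\mathfrak{G}\setminus\mathfrak{h}$ carrying $\ell^{+}$ to $\ell^{-}$ (for such an element the swap of $\cM_2$ and $\cM_2'$ \emph{is} automatic, by the remark above). The paper does not use $\mathfrak{w}$ here at all: it adapts the proof of \cite[Lemma 7.14]{1}, replacing the points $(\pm\sqrt{-2}\,b,b,0)$ of the $q\equiv 1\pmod 8$ case by $P^{+}$ and $P^{-}$, i.e.\ it reruns the explicit stabilizer/collineation analysis of \cite{1} inside $\mathfrak{G}$ for these two generators of $\cG_2$. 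Note that the one-line proof of the preceding lemma (for $g_0^{\pm}\in\cG_1$) leans on the corresponding fact for $\cG_1$ established in \cite{1}; it is not a licence to reuse $\mathfrak{w}$ on $\cG_2$.
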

\begin{proof}
	We use the same arguments of \cite[Lemma 7.14]{1}. Indeed, we exchange $(\sqrt{-2}b,b,0)$ and $(-\sqrt{-2}b,b,0)$ with $P^+$ and $P^-$ and the proof follows.
\end{proof}

We are ready to choose $\cM_1$ and $\cM_2$.
\begin{itemize}
	\item $ \cM_1 $ is the $\Phi$-orbit containing $g_0^+$.

	\item $\cM_2$ is the $\Phi$-orbit containing $\ell^+$ for $r < r'$ and $\ell^-$ for $r > r'$.
\end{itemize}

\begin{remark}\label{rk:f(x)}
As in \cite[Proposition 7.15]{1}, $r'$ is obtained counting the squares in the value set of the polynomial $f(\xi)$, defined in Theorem \ref{th:III}. More precisely, we obtain that the number of $\xi \in \bfq$ for which $f(\xi) \in \square_q$ equals $2r'-1$.
\end{remark}
Therefore we have the following proposition.
\begin{proposition}
	Condition \emph{(B)} for case \emph{(III)} holds if and only if \[r=\frac{1}{4}(q-1), \mbox{ and } r'=\frac{1}{4}(q+3)\]
	or
	\[r=\frac{1}{4}(q+3), \mbox{ and } r'=\frac{1}{4}(q-1)\]
\end{proposition}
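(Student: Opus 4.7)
The plan is to translate Condition (B) at $P^+$ and $P^-$ into two linear equations in $r$, $r'$, and the choice of which orbit among $\{\cM_2,\cM_2'\}$ is labeled $\cM_2$. First I would fix the target count. By the two previous propositions of this subsection, the generators through $P^+$ meeting $\cX^+(\fqs)$ are the $\frac{1}{2}(q+1)$ generators in $\cG_1$ that meet $\Delta^+$, together with the single generator $\ell^+\in\cG_2$ that meets $\Omega$; hence $n_{P^+}(\cX^+)=\frac{1}{2}(q+3)$, and Condition (B) at $P^+$ demands exactly $\frac{1}{4}(q+3)$ of these generators to lie in $\cM$. The same count applies verbatim at $P^-$.

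Next I would use the involution $\textbf{W}$ to translate counts at $P^-$ into counts at $P^+$. Since $\textbf{W}$ swaps $P^+\leftrightarrow P^-$, sends $\cX^+$ to $\cX^-$, and — by the two lemmas proved just above — interchanges the $\Phi$-orbits $\cM_1\leftrightarrow\cM_1'$ on $\cG_1$ and also interchanges the $\Phi$-orbits on $\cG_2$ containing $\ell^+$ and $\ell^-$, the number of generators of $\cM_1$ through $P^-$ meeting $\Delta^+$ equals the number of generators of $\cM_1'$ through $P^+$ meeting $\Delta^+$, namely $r'$. Thus $P^+$ is incident with $r$ generators of $\cM_1$ and $r'$ of $\cM_1'$ in $\cG_1$, while $P^-$ is incident with $r'$ of $\cM_1$ and $r$ of $\cM_1'$.

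I would then split according to the rule defining $\cM_2$. If $r<r'$ then $\cM_2$ is the $\Phi$-orbit of $\ell^+$, so the generators of $\cM$ through $P^+$ meeting $\cX^+(\fqs)$ number $r+1$ while those through $P^-$ number $r'$ (since $\ell^-\notin\cM_2$). Condition (B) at both points thus requires
\[
r+1=\tfrac{1}{4}(q+3),\qquad r'=\tfrac{1}{4}(q+3),
\]
and combined with $r+r'=\tfrac{1}{2}(q+1)$ from \eqref{eq:r+r'} this forces $r=\tfrac{1}{4}(q-1)$, $r'=\tfrac{1}{4}(q+3)$, consistent with $r<r'$. The case $r>r'$ is symmetric and yields the other pair $r=\tfrac{1}{4}(q+3)$, $r'=\tfrac{1}{4}(q-1)$. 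The converse direction is immediate: plugging either of the two pairs back into the two equations above verifies (B) at $P^+$ and at $P^-$ simultaneously.

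The only non-computational point is the correct identification of which orbits are swapped by $\textbf{W}$, but this is handled by the two preceding lemmas, so no new group-theoretic work is required. The remaining obstacle is purely bookkeeping: one must be careful that the same labelling convention for $\cM_1$ versus $\cM_1'$ (fixed by $g_0^+\in\cM_1$) is used consistently when counting at $P^+$ and at $P^-$, so that the $\textbf{W}$-symmetry really does exchange $r$ and $r'$ rather than merely permuting the orbits abstractly.
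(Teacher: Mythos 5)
Your proposal is correct and follows essentially the same route as the paper: the paper's proof simply notes that $n_P=\frac{1}{2}(q+3)$, that Condition (B) requires half of these generators to lie in $\cM_1\cup\cM_2$, and that the stated values of $r$ and $r'$ are then ``readily seen'' from $r+r'=\frac{1}{2}(q+1)$ and the rule selecting $\cM_2$. Your version merely makes explicit the bookkeeping at $P^-$ via the involution $\textbf{W}$, which the paper leaves implicit.
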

\begin{proof}
	Note that $n_P=\frac{1}{2}(q+3)$ and that condition (B) holds if and only if half of them is in $\cM_1 \cup \cM_2$. The choices of $r$ and $r'$ are readily seen.
\end{proof}
Thus, Theorem \ref{th:III} follows.

Since the properties of the plane curve $\cC_4$

\[ Y^2=X^4-24\omega X^2+16\omega^2, \mbox{ with } \omega=2\]

depend only on $q \equiv 1 \pmod 4$, we also get  the proof of Theorem \ref{th:main}, that is Condition (B) in case (III) is satisfied if and only if the curve $\cC_3$
\[Y^2=X^3-X\]
has $q-1$ or $q+3$ points.
For the details, see \cite{1} at the end of  Section 7.
\section{conclusion}
We are in the position to work out the case $q=p$ when $p \equiv_4 1$. We write $p=\pi \bar{\pi}$, with $\pi \in \mathbb{Z}[i]$.\\
Here, $\pi$ can be chosen such that $\pi= \alpha_1 + i \alpha_2$ and $\alpha_1=1$. From \cite[Section 2.2.2]{3}, $N_p(\cC_3)=q+1-2\alpha_1$.
This implies that condition (B) in case (III) is satisfied if and only if
\[p=1+4a^2\quad \mbox{ and } \quad N_p(\cC_3)=q-1\]
Therefore, Theorem \ref{th:1} is a corollary of Theorem \ref{th:CD}, \ref{th:I-II} and \ref{th:III}.

\section{An application to strongly regular graphs}
 A \textit{strongly regular graph} with parameters $(v,k,\lambda,\mu)$ is a graph with $v$ vertices, each vertex lies on $k$ edges, any two adjacent vertices have $\lambda$ common neighbours and any two non-adjacent vertices have $\mu$ common neighbours. Strongly regular graphs have been obtained in different research areas in Combinatorics, in particular from Finite geometry.

 A strongly regular graph $\Gamma$ with parameters $((q^{3}+1)(q+1-m),(q^{2}+1)(q-m),q-1-m,q^{2}+1-m(q+1))$ may arise  from any $m$-regular system $\mathcal{S}$ on the Hermitian surface $\mathcal{H}(3,q^{2})$, $q$ odd, where vertices of $\Gamma$ are the lines lying on the surface but not contained in $\mathcal{S}$, and two vertices are adjacent if the lines are incident. Actually, this construction only works for $m=\frac{1}{2}(q+1)$ as it was pointed out in \cite{4}. In other words, each $m$-regular system on the Hermitian surface needs to be an hemisystem (according to Segre's result quoted in Introduction). Thus, every hemisystem gives rise to a strongly regular graph with the following parameters: $v=\frac{1}{2}(q^{3}+1)(q+1)$, $k=\frac{1}{2}(q^{2}+1)(q-1)$, $\lambda=\frac{1}{2}(q-3)$, $\mu=\frac{1}{2}(q-1)^{2}$. From this the spectrum of $\Gamma$ can be computed. The first eigenvalue is $k$, of multiplicity 1, and other two (the restricted eigenvalues) are:
 $$\theta_{1}= \textstyle\frac{1}{2}\big[(\lambda-\mu)+\sqrt{(\lambda-\mu)^{2}+4(k-\mu)}\big]=q-1,$$
 $$\theta_{2}=\textstyle\frac{1}{2}\big[(\lambda-\mu)-\sqrt{(\lambda-\mu)^{2}+4(k-\mu)}\big]=
 \textstyle\frac{1}{2}(-q^{2}+q-2),$$
 of multiplicity
 $$m_{1}=\textstyle\frac{1}{2}\Big[(v-1)-\frac{2k+(v-1)(\lambda-\mu)}{\sqrt{(\lambda-\mu)^{2}+4(k-\mu)}}\Big]=\textstyle\frac{1}{2}(q^{4}-q^{3}+2q^{2}-q+1),$$
 $$m_{2}=\textstyle\frac{1}{2}\Big[(v-1)+\frac{2k+(v-1)(\lambda-\mu)}{\sqrt{(\lambda-\mu)^{2}+4(k-\mu)}}\Big]=(q^{2}+1)(q-1)=2k,$$
 respectively.

 The hemisystem on the Hermitian surface $H(3,p^{2})$, $p=1+4a^{2}$ with an integer $a$, constructed in the present paper produces a strongly regular graph $\Gamma$ with the above parameters for $q=p$. We point out that, in the smallest case $p=5$, the graph $\Gamma$ has parameters $(378,52,1,8)$ and spectrum $52, 4^{273}, -11^{104}$. A comparison of $\Gamma$ with the Cossidente-Penttila strongly regular graph with the same parameters, shows that they are cospectral.  It is an open question whether these two strongly regular graphs are isomorphic.

	\section{Two-weight codes from Strongly regular graphs}\label{codes}

An $[n,k]$-linear code $C$ over the finite field $\bfq$ is a $k$-dimensional subspace of $\bfq^{n}$. Vectors in $C$ are called \textit{codewords}, and the weight $w(v)$ of $v\in C$ is the number of non-zero entries in $v$. A \textit{two-weight code} is an $[n,k]$-linear code $C$ such that $|\{w:\exists v\in C\setminus\{\underline{0}\}\hspace{2 mm} w(v)=w\}|=2$\\
For a subset $\Omega$ of $\mathbb{F}_{q}^k$, with $\Omega=-\Omega$ and $0\not\in \Omega$, define $G(\Omega)$ to be the graph whose vertices are the vectors of $\mathbb{F}_{q}^k$, and two vertices are adjacent if and only if their difference is in $\Omega$.  Moreover, let $\Sigma$ denote the set of points in $\mathrm{PG}(k-1,q)$ that correspond to the vectors in $\Omega$, i.e. $\Sigma=\{\langle \mathbf{v} \rangle\,:\, \mathbf{v} \in \Omega\}$.
An useful result connecting two-weight linear codes and strongly regular graphs is found in \cite{CK} which relies on projective $(n,k,h_{1},h_{2})$-sets, i.e. a proper, non-empty sets $\Sigma$ of $n$ points of the projective space $\mathrm{PG}(k-1,q)$ such that every hyperplane meets $\Sigma$ in either $h_{1}$ or $h_{2}$ points.
\begin{result}\label{CK1} \cite[Theorems 3.1 and 3.2]{CK}
	Let $\Omega$ and $\Sigma$ be defined as above. If $\Sigma=\{\langle \mathbf{v_i}\rangle\,:\, i=1,\ldots,n\}$ is a proper subset of $\mathrm{PG}(k-1,q)$ that spans $\mathrm{PG}(k-1,q)$, then the following are equivalent:
	\begin{itemize}
		\item[(i)] $G(\Omega)$ is a strongly regular graph;
		\item[(ii)] $\Sigma$ is a projective $(n,k,n-w_{1},n-w_{2})$-set for some $w_{1}$ and $w_{2}$;
        \item[(iii)]the linear code $ C=\{(\mathbf{x}\cdot \mathbf{v_1}, \mathbf{x}\cdot \mathbf{v_2}, \ldots, \mathbf{x}\cdot \mathbf{v_n})\,:\, \mathbf{x}\in \mathbb{F}_q^k\}$ (here $\mathbf{x}\cdot \mathbf{v}$ is the classical scalar product) is an $[n,k]$-linear two-weight code with weights $w_{1}$ and $w_{2}$.
	\end{itemize}
\end{result}
We point out that the hemisystem constructed in the present paper gives rise to a projective set. In fact, it is known that an $m$-regular system on the Hermitian surface also provides an $m$-ovoid $\mathcal{O}$ on the elliptic quadric $\mathcal{Q}^{-}(5,q)$, which is the image of $\mathcal{H}(3,q^{2})$ via the Klein correspondence.  Moreover, see \cite[Theorem 11]{5}, an $m$-ovoid on the elliptic quadric $\mathcal{Q}^{-}(5,q)$ is a projective $(m(q^{r+1}+1),6,m(q^{r}+1),m(q^{r}+1)-q^{r})$-set and it produces a strongly regular graph with parameters: $$(q^{6},m(q-1)(q^{3}+1),m(q-1)(3+m(q-1))-q^{2},m(q-1)(m(q-1)+1)).$$
 Since $m=\frac{1}{2}(q+1)$ we get an $srg(q^{6},\frac{1}{2}(q^{3}+1)(q^{2}-1),\frac{1}{4}(q^{4}-5),\frac{1}{4}(q^{4}-1))$, and the $\frac{1}{2}(q+1)$-ovoid $\mathcal{O}$ is a projective $(\frac{1}{2}(q^{3}+1)(q+1),6,\frac{1}{2}(q^{2}+1)(q+1),\frac{1}{2}(q^{3}-q^{2}+q+1))$-set, which gives the $[\frac{1}{2}(q^{3}+1)(q+1),6]$-linear two-weight code with weights $w_{1}=\frac{1}{2}q^{2}(q^{2}-1)$ and $w_{2}=\frac{1}{2}q^{2}(q^{2}+1)$.

The construction given in this paper, when $q=5$, gives rise to a $3$-ovoid on the elliptic quadric $\mathcal{Q}^{-}(5,5)$, and an $srg(15625,1512,155,156)$, i.e.  a projective $(378,6,78,53)$-set.
From Result \ref{CK1} it produces a $[378,6]$-linear two-weight code with $w_{1}=300$ and $w_{2}=325$.

\section*{Appendix A}
We provide a proof of Proposition \ref{pr:hou}. Since our proof relies on cyclotomic fields from algebraic number theory, we present it in the form of an appendix.

Let $\mathbb{Q}(\zeta_m)$ the cyclotomic field of $m$th roots of unity with $\zeta_m=e^{2\pi i /m} \in \mathbb{C}$.
In particular, the cyclotomic field $\mathbb{Q}(\zeta_{16})$ contains $\sqrt{2}$ as an integer. Let $\mathfrak{b}$ a prime ideal of $\mathbb{Q}(\zeta_{16})$ such that $\mathfrak{b}$ contains $p$ (i.e. $\mathfrak{b} \mid p$). The extension $\mathfrak{b} \mid p$ is unramified and $\mathbb{Z}[\zeta_{16}]/\mathfrak{b} \cong \mathbb{F}_{p^4}$; see \cite[Proposition 13.2.5]{IrRo} and \cite[Section 4.5]{XDH}. Note that $h=\pm \sqrt{2} \pmod {\mathfrak{b}}$. We may assume $h \equiv \sqrt{2} \pmod {\mathfrak{b}}$.

\begin{proof}[Proof of Proposition \ref{pr:hou}]
We do the computation for $q \equiv 13 \pmod{16}$, the proofs for the other cases being analogous.
\begin{equation*}
    \begin{aligned}
        (1+h)^\frac{q+1}{2}h^\frac{q-1}{2} &\equiv (1+\sqrt{2})^\frac{q+1}{2}(\sqrt{2})^\frac{q-1}{2} \pmod{\mathfrak{b}} \\
        & =(\sqrt{2}+2)^\frac{q+1}{2} \frac{1}{\sqrt{2}}\\
        &=(\zeta_8+\zeta_8^{-1}+2)^\frac{q+1}{2}\frac{1}{\sqrt{2}}\\
        &=(\zeta_{16}+\zeta_{16}^{-1})^{q+1}\frac{1}{\sqrt{2}}\\
        &\equiv (\zeta_{16}+\zeta_{16}^{-1})(\zeta_{16}^{13}+\zeta_{16}^{-13})\frac{1}{\sqrt{2}} \pmod{\mathfrak{b}}\\
       &\equiv (\zeta_{16}+\zeta_{16}^{-1})(\zeta_{16}^{-3}+\zeta_{16}^{3})\frac{1}{\sqrt{2}} \pmod{\mathfrak{b}}\\
       &=(\zeta_{16}^4+\zeta_{16}^{-2}+\zeta_{16}^2+\zeta_{16}^{-4})\frac{1}{\sqrt{2}}\\
       &=(\zeta_8+\zeta_8^{-1})\frac{1}{\sqrt{2}}=1
    \end{aligned}
\end{equation*}
\end{proof}

\section*{Acknowledgements}
The research of  Vincenzo Pallozzi Lavorante was partially supported  by the Italian National Group for Algebraic and Geometric Structures and their Applications (GNSAGA - INdAM).


\begin{thebibliography}{99}
  \bibitem{5} J. Bamberg, S. Kelly, M. Law, T. Penttila, Tight sets and $m$-ovoids of finite polar spaces, Journal of Combinatorial Theory A, 114(7), pp. 1293-1314, 2007.
  \bibitem{11} J. Bamberg, M. Giudici, G.F. Royle, Every flock generalized quadrangle has a hemisystem, Bulletin of the London Mathematical Society, 42 pp. 795–810, 2010.
  \bibitem{22} J. Bamberg, M. Giudici, G.F. Royle, Hemisystems of small flock generalized quadrangles, Designs Codes and Cryptography, 67, pp. 137–157, 2013.
  \bibitem{B1} J. Bamberg, M. Lee, K. Momihara, Q. Xiang, A new infinite family of hemisystems of the Hermitian surface, Combinatorica, 38, pp. 43–66, 2018.
  \bibitem{CK} R. Calderbank, W. M. Kantor, The geometry of two-weight codes,  Bulletin of the London Mathematical Society, 18, pp. 97-122, 1986.
  \bibitem{0} A. Cossidente, T. Penttila, Hemisystems on the Hermitian surface, Journal of the London Mathematical Society, 72(2), pp. 731-741, 2005.
  \bibitem{B2} A. Cossidente, Combinatorial structures in finite classical polar spaces, in: Surveys in Combinatorics 2017, in: LMS Lecture Note Series, vol.440, pp. 204–237, 2017.
  \bibitem{cp} A. Cossidente, F. Pavese, Intriguing sets of quadrics in $PG(5, q)$, Advances in Geometry, 17, pp. 339–345, 2017.
  \bibitem{FT} R. Fuhrmann, F. Torres, The genus of curves over finite fields with many rational points, Manuscripta Mathematica, 89, pp. 103–106, 1996.
  \bibitem{XDH} X. Hou, Lectures on Finite Fields, Graduate Studies in Mathematcs 190, American Mathematical Society, Providence, RI, 2018

  \bibitem{IrRo} K. Ireland and M. Rosen, A Classical Introduction to Modern Number Theory, 2nd ed., Springer, New York, 1990.

  \bibitem{2} G. Korchm\'{a}ros, F. Torres, Embedding of a Maximal Curve in a Hermitian Variety, Compositio Mathematica, 128 (1), pp. 95-113, 2001.
  \bibitem{1} G. Korchm\'{a}ros, G. P. Nagy, P. Speziali, Hemisystems of the Hermitian surface, Journal of Combinatorial Theory A, 165, pp. 408-439, 2019.
  \bibitem{Segre}B. Segre, Forme e geometrie hermitiane, con paricolare riguardo al caso finito, Annali di Matematica Pura ed Applicata, 1965, 70(1), pp. 1-201.
  \bibitem{3} J.P. Serre, Lectures on NX(p), CRC Press, Taylor e Francis, Boca Raton, 2011.
  \bibitem{4} J. A. Thas, Ovoids and spreads of finite classical polar spaces, Geometriae Dedicata, 10, pp. 135–143, 1985.
		\end{thebibliography}
	\end{document}